\DeclareMathOperator{\Ric}{Ric}
\DeclareMathOperator{\Hess}{Hess}
\DeclareMathOperator{\im}{Im}
\DeclareMathOperator{\re}{Re}
\newtheorem{theorem}{Theorem}[section]
\theoremstyle{plain}
\newtheorem{corollary}[theorem]{Corollary}
\newtheorem{definition}[theorem]{Definition}
\newtheorem{example}{Example}
\newtheorem{lemma}[theorem]{Lemma}
\newtheorem{problem}{Problem}
\newtheorem{proposition}[theorem]{Proposition}
\newtheorem{remark}{Remark}
\begin{document}

\title{$f$-minimal Lagrangian Submanifolds in K\"ahler Manifolds with Real Holomorphy Potentials}
\author{WEI-BO SU}
\date{}
\maketitle
\begin{abstract}
The aim of this paper is to study variational properties for $f$-minimal Lagrangian submanifolds in K\"ahler manifolds with real holomorphy potentials. Examples of submanifolds of this kind incuding soliton solutions for Lagrangian mean curvature flow (LMCF). We derive second variation formula for $f$-minimal Lagrangians as a generalization of Chen and Oh's formula for minimal Lagrangians. As a corollary, we obtain stability of expanding and translating solitons for LMCF. We also define calibrated submanifolds with respect to $f$-volume in gradient steady K\"ahler--Ricci solitons as generalizations of special Lagrangians and translating solitons for LMCF, and show that these submanifolds are necessarily noncompact. As a special case, we study the exact deformation vector fields on Lagrangian translators. Finally we discuss some generalizations and related problems.
\end{abstract}

\section{Introduction}
\subsection{Motivation: Stability of Minimal Lagrangians}
The stability properties of minimal Lagrangian submanifolds in K\"ahler manifolds is studied by Chen {\cite{BYB}} and Oh {\cite{Oh}}. In particular, they derive a beautiful second variational formula as follows. Let $(X, J, \overline{\omega})$ be a K\"ahler manifold with metric $\overline{g}$, $F:L\looparrowright X$ be a minimal Lagrangian submanifold, and let $\{F_{t}\}$ be a smooth family of compactly supported normal deformations of $F$ with $F_{0} = F$ and $\frac{d}{dt}\big|_{t=0}F_{t} = \xi\in\Gamma_{c}(NL)$. Since $L$ is Lagrangian, $\xi$ is naturally identified with a $1$-form $\alpha_{\xi}:= F^{*}\iota_{\xi}\overline{\omega}\in\Gamma(T^{*}L)$. Then 
\begin{align}\label{CO}
(\delta^{2}V)_{F(L)}(\xi) := \frac{d^{2}}{dt^{2}}\big|_{t=0}V(F_{t}) = \int_{L}\left(|d\alpha_{\xi}|^{2}_{g} + |d^{*}_{g}\alpha_{\xi}|^{2} - \overline{\Ric}(\xi, \xi) \right)dV_{g},
\end{align}
where $g$ is the induced metric on $L$, and $\overline{\Ric}$ is the Ricci curvature of $\overline{g}$. A minimal Lagrangian $F:L\looparrowright X$ is called \textit{stable} if $(\delta^{2}V)_{F(L)}(\xi)\geq 0$ for all $\xi\in\Gamma_{c}(NL)$, \textit{Lagrangian stable} if $(\delta^{2}V)_{F(L)}(\xi)\geq 0$ for all $\xi\in\Gamma_{c}(NL)$ with $\alpha_{\xi}$ being closed, and \textit{Hamiltonian stable} if $(\delta^{2}V)_{F(L)}(\xi)\geq 0$ for all $\xi\in\Gamma_{c}(NL)$ with $\alpha_{\xi}$ being exact.
From $(\ref{CO})$ one can deduce that
\begin{description}
\item[(i)] if $\overline{\Ric}<0$, then any minimal Lagrangian in $X$ is strictly stable, 
\item[(ii)] if $(X, J,\:\overline{\omega})$ is positive K\"ahler-Einstein, that is, $\overline{\Ric} = c\cdot\overline{g}$ for some $c>0$, then a minimal Lagrangian in $X$ is Hamiltonian stable if and only if $\lambda_{1}(\Delta_{g})\geq c$, and
\item[(iii)] if $(X, J,\:\overline{\omega}, \Omega)$ is a Calabi--Yau manifold equipped with Ricci-flat K\"ahler metric, then any minimal Lagrangian submanifold in $X$ is stable, and Jacobi fields on $L$ are given by solutions of harmonic $1$-form equation
\begin{align*}
d\alpha = d^{*}_{g}\alpha = 0, \;\;\alpha\in\Gamma(T^{*}L).
\end{align*}
\end{description}
The fact (iii) can be obtained from a Calibrated Geometry point of view. It is known that any minimal Lagrangian submanifold in a Calabi--Yau manifold $(X, J,\:\overline{\omega}, \Omega)$ is {\it special Lagrangian}, that is, $F:L\looparrowright X$ is \textit{calibrated} by $\re(e^{-i\theta_{0}}\Omega)$, or equivalently, 
\begin{align}\label{SL}
F^{*}\omega = F^{*}\im(e^{-i\theta_{0}}\Omega) = 0
\end{align}
on $L$ for some phase $\theta_{0}\in S^{1}$. This guarantees that if $L$ is compact, $F:L\looparrowright X$ is not only minimal but volume-minimizing in its homology class. McLean {\cite{Mclean}} shows that the linearization of equation $(\ref{SL})$ is exactly the harmonic $1$-form equation,
thus any Jacobi field on $L$ also induces an infinitesimal special Lagrangian deformation. Moreover, the special Lagrangian deformations are unobstructed and the moduli space of special Lagrangians is a smooth manifold of dimension $b^{1}(L)$. 

\subsection{$f$-stability of $f$-minimal Lagrangians and LMCF Solitons}
In this paper, we aim to generalize the above story to Lagrangian submanifolds which are ``minimal'' with respect to certain weighted volume functionals. Consider a smooth function $f:X\to\mathbb{R}$ and the corresponding weighted volume form $e^{-f}dV_{\overline{g}}$ on a K\"ahler manifold $(X, J, \overline{\omega})$. An analogue quantity for Ricci curvature $\overline{\Ric}$ on the metric measure space $(X, \overline{g}, e^{-f}dV_{\overline{g}})$ is the symmetric $2$-tensor $\overline{\Ric}_{f} = \overline{\Ric} + \overline{\Hess} f$, called the Bakry--\'Emery Ricci tensor. Since $\overline{g}$ is K\"ahler, $\overline{\Ric}$ is Hermitian with respect to $J$. Hence it is natural to require $\overline{\Hess} f$ to be Hermitian. This is equivalent to requiring that $f$ to be a \textit{real holomorphy potential}, that is, $\overline{\nabla}^{1,0}f$ is a holomorphic vector field on $X$. 

Define the \textit{$f$-volume functional} on the space of $p$-submanifolds by
\begin{align*}
V_{f}: (F:L\looparrowright X)\longmapsto\int_{L}e^{-\frac{p}{2m}F^{*}f}\:dV_{g}\in\mathbb{R}_{+}\cup\{+\infty\}.
\end{align*}
Notice that $e^{-\frac{p}{2m}F^{*}f}dV_{g}$ is the volume of the induced conformal metric $F^{*}(e^{-\frac{f}{m}}\:\overline{g})$ on $L$. A $p$-submanifold $F:L\looparrowright X$ is a critical point with respect to $V_{f}$ if and only if the \textit{generalized mean curvature vector} $H + \frac{p}{2m}(\overline{\nabla}f)^{\perp} = 0$, where $H$ is the mean curvature vector of $F:L\looparrowright X$. Such a submanifold is called an \textit{$f$-minimal} submanifold. 

Under the above settings, we prove the following second variational formula for $f$-minimal Lagrangian submanifolds:
\begin{theorem}\label{Thm1}
Assume that $(X, J, \overline{\omega}, f)$ is a K\"ahler manifold with a real holomorphy potential, and $F: L\looparrowright X$ is an $f$-minimal Lagrangian submanifold. Then for any compactly supported normal variational vector field $\xi$ on $L$,
\begin{align}\label{SV}
(\delta^{2}V_{f})_{F(L)}(\xi) = \int_{L}\{\:|d\alpha_{\xi}|^{2}_{g} + |d^{*}_{f}\alpha_{\xi}|^{2} - \overline{\Ric}_{f}(\xi, \xi)\:\}\;e^{-\frac{1}{2}F^{*}f}dV_{g},
\end{align}
where $\alpha_{\xi} := F^{*}\iota_{\xi}\overline{\omega}$ is the $1$-form on $L$ associated to $\xi$, and $d^{*}_{f}$ is the adjoint of $d$ in the weighted space $L^{2}(\Lambda^{*}T^{*}L, e^{-\frac{1}{2}F^{*}f}dV_{g})$.
\end{theorem}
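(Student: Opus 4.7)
The plan is to adapt Chen and Oh's derivation of \eqref{CO} directly to $V_f$, treating $e^{-F_t^*f/2}$ as an extra factor in the integrand and using the real holomorphy potential hypothesis exactly where needed to produce the pattern $\overline{\Ric}\leadsto\overline{\Ric}_f$, $d^*\leadsto d^*_f$.

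\textbf{Stage 1: differentiating $V_f$ twice.} As a warm-up, the standard formulas $\tfrac{d}{dt}\big|_{t=0}dV_{g_t}=(\mathrm{div}_g\xi^T-\langle H,\xi\rangle)\,dV_g$ and $\tfrac{d}{dt}\big|_{t=0}F_t^*f=\xi(f)$ yield
\[
\tfrac{d}{dt}\big|_{t=0} V_f(F_t) = -\int_L\bigl\langle\xi,\,H+\tfrac12(\overline{\nabla} f)^\perp\bigr\rangle\,e^{-F^*f/2}dV_g,
\]
confirming $f$-minimality as the Euler--Lagrange condition and fixing all signs. For the second variation I would differentiate once more, expanding the Leibniz rule across $e^{-F_t^*f/2}$, $\sqrt{\det g_t}$ and the velocity, and substitute $H=-\tfrac12(\overline{\nabla} f)^\perp$ at $t=0$ to eliminate first-order-in-$H$ contributions. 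The integrand takes the schematic form
\[
|\nabla^\perp\xi|^2-\textstyle\sum_i\overline{R}(\xi,e_i,\xi,e_i)-|A_\xi|^2-\tfrac12\overline{\Hess} f(\xi,\xi)+(\text{cross terms in }\xi(f)\text{ and }\mathrm{div}_g\xi^T),
\]
integrated against $e^{-F^*f/2}dV_g$.

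\textbf{Stage 2: the Chen--Oh dictionary.} Under the identification $\xi\leftrightarrow\alpha_\xi$, the parallelism $\overline{\nabla}\overline{\omega}=0$ intertwines $\nabla^\perp$ with the Levi--Civita connection on $T^*L$, so $|\nabla^\perp\xi|^2=|\nabla\alpha_\xi|^2$. Then the Bochner--Weitzenb\"ock formula on one-forms, the Gauss equation, and the K\"ahler identity $\overline{\Ric}(J\cdot,J\cdot)=\overline{\Ric}(\cdot,\cdot)$ (together with $J(TL)=NL$) repackage the $f$-independent portion of the integrand, modulo exact terms, into $|d\alpha_\xi|^2+|d^*\alpha_\xi|^2-\overline{\Ric}(\xi,\xi)$. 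This is exactly the Chen--Oh computation.

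\textbf{Stage 3: absorbing the $f$-terms.} Here the real holomorphy potential hypothesis is essential. Since $\overline{\Hess} f$ is Hermitian, $\overline{\Hess} f(J\cdot,J\cdot)=\overline{\Hess} f(\cdot,\cdot)$ mirrors the K\"ahler symmetry of $\overline{\Ric}$, so $-\tfrac12\overline{\Hess} f(\xi,\xi)$ combines with $-\overline{\Ric}(\xi,\xi)$ to give $-\overline{\Ric}_f(\xi,\xi)$. The remaining cross terms, after one integration by parts against the weight $e^{-F^*f/2}$ (i.e.\ using the weighted divergence theorem), upgrade $|d^*\alpha_\xi|^2$ to $|d^*_f\alpha_\xi|^2=|d^*\alpha_\xi+\tfrac12\iota_{\nabla F^*f}\alpha_\xi|^2$, which is precisely the identity forced by Leibniz on the weighted codifferential. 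The main obstacle is the bookkeeping in this stage: tracking each occurrence of $\overline{\nabla} f$ through the $J$-twists and the tangent/normal splits along $L$, and verifying that no stray terms survive beyond the two advertised modifications. A useful mediating identity is that for any $f$-minimal Lagrangian $F^*\iota_H\overline{\omega}=\tfrac12 F^*(df\circ J)$, which encodes the compatibility of $f$-minimality with the Lagrangian condition and streamlines the cancellations.
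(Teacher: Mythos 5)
Your plan follows essentially the same route as the paper's proof: differentiate the first variation formula once more using $H+\tfrac12(\overline{\nabla}f)^{\perp}=0$, run the Chen--Oh computation through the isomorphism $\widetilde{\omega}$ (Gauss equation, Weitzenb\"ock, and the K\"ahler symmetry of $\overline{\Ric}$), and then use the Hermitian property $\overline{\Hess}f(J\cdot,J\cdot)=\overline{\Hess}f(\cdot,\cdot)$ together with a weighted integration by parts to assemble $\overline{\Ric}_f$ and $d^{*}_{f}$. The ``stray terms'' you flag are exactly the ones the paper cancels explicitly (the $\tfrac12\overline{\nabla}_{\nabla f}\xi$ term against the Lie-derivative part of $\Delta_f=\Delta_h+\tfrac12\mathcal{L}_{\nabla f}$, and the normal Hessian piece from the Gauss equation pairing with the tangential one via Lemma 3.3 of Oh), so the outline is correct and matches the paper's argument.
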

\noindent We call an $f$-minimal Lagrangian \textit{$f$-stable} if $(\delta^{2}V_{f})_{F(L)}(\xi)\geq 0$ for all compactly supported normal variational vector field $\xi$ on $L$, and the notions \textit{Lagrangian $f$-stable} and \textit{Hamiltonian $f$-stable} are defined analogously.

Typical examples of K\"ahler manifolds admitting real holomorphy potentials are \textit{gradient K\"ahler--Ricci solitons} (KR soliton for short), that is, K\"ahler manifolds satisfying
\begin{align*}
\overline{\Ric} + \overline{\Hess}f = c\cdot\overline{g}.
\end{align*}
When $f$ is constant, this equation reduces to K\"ahler-Einstein equation. In this sense, gradient KR solitons are generalizations of K\"ahler-Einstein manifolds. For $c=0$, the soliton is called \textit{steady}, for $c>0$, it is called \textit{shrinking}, and for $c<0$, it is called \textit{expanding}. We then obtain a corollary of Theorem $\ref{Thm1}$ similar to that obtained by Chen and Oh's formula $(\ref{CO})$, that every $f$-minimal Lagrangian in a steady or expanding gradient KR soliton is $f$-stable, and that a $f$-minimal Lagrangian in a shrinking gradient KR soliton is Hamiltonian $f$-stable if and only if $\lambda_{1}(\Delta_{f})\geq c$, where $\Delta_{f}$ is the Witten Laplacian on $L$ associated to $g$ and $F^{*}f$.

Our formula $(\ref{SV})$ can be applied to study the stability of soliton solutions for Lagrangian mean curvature flow (LMCF). In fact, if $X = \mathbb{C}^{m}$ with standard Euclidean metric $g_{0}$ and $f(z):=\pm\frac{|z|^{2}}{2}$, then $(\mathbb{C}^{m}, i, g_{0}, f)$ is a shrinking/expanding gradient KR soliton and the $f$-minimal Lagrangians are \textit{shrinking/expanding solitons for mean curvature flow}, respectively, and if $f(z):=\langle z, T\rangle$ for some fixed $T\in\mathbb{C}^{m}$, $(\mathbb{C}^{m}, i, g_{0}, f)$ becomes a gradient steady KR soliton and the $f$-minimal Lagrangians are \textit{translating solitons} (see Example 1 and 2). By Theorem $\ref{Thm1}$, we see that:
\begin{corollary}
Every expanding soliton and translating soliton for Lagrangian mean curvature flow is $f$-stable.
\end{corollary}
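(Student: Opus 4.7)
The plan is to apply Theorem~\ref{Thm1} directly, reducing the claim to a pointwise sign check of the Bakry--\'Emery Ricci tensor on the two ambient soliton structures described in Examples~1 and~2.

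First I would record the values of $\overline{\Ric}_f$ on the two background spaces. In the expanding case $X=\mathbb{C}^m$ with $f(z)=-|z|^2/2$, the Euclidean metric is flat so $\overline{\Ric}=0$, while $\overline{\Hess}f=-g_0$; hence $\overline{\Ric}_f=-g_0$ is negative definite. In the translator case $f(z)=\langle z,T\rangle$, the potential is (the real part of) a $\mathbb{C}$-linear function on $\mathbb{C}^m$, so $\overline{\Hess}f\equiv 0$ and $\overline{\Ric}_f=0$. Both are consistent with the steady/expanding classification recalled after Theorem~\ref{Thm1}, and in both cases $\overline{\nabla}^{1,0}f$ is a holomorphic vector field (a constant translation in the translator case, a multiple of the Euler field in the expander case), so the hypothesis of Theorem~\ref{Thm1} applies.

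Next I would substitute into the second variation formula~$(\ref{SV})$. For an expanding soliton and any $\xi\in\Gamma_c(NL)$, one has $-\overline{\Ric}_f(\xi,\xi)=|\xi|_{g_0}^2$, so the integrand becomes
\begin{align*}
|d\alpha_\xi|^2_g + |d^*_f\alpha_\xi|^2 + |\xi|_{g_0}^2,
\end{align*}
which is pointwise non-negative. For a translator, the curvature term drops out entirely and the integrand reduces to $|d\alpha_\xi|^2_g+|d^*_f\alpha_\xi|^2\ge 0$. In either case, integrating against the positive weighted volume form $e^{-\frac12 F^*f}\,dV_g$ gives $(\delta^2 V_f)_{F(L)}(\xi)\ge 0$, which is exactly $f$-stability.

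There is essentially no substantive obstacle: the corollary is a one-line consequence of Theorem~\ref{Thm1} once the favourable sign of $-\overline{\Ric}_f$ is identified in each of the two ambient geometries. The only thing one must be careful about is not asserting anything about the shrinking case $f(z)=+|z|^2/2$, where $\overline{\Ric}_f=g_0>0$ makes the curvature term in $(\ref{SV})$ work against stability; the corollary is tailored to the two cases where that term is non-positive.
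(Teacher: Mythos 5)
Your proposal is correct and is exactly the paper's argument: the corollary follows by plugging the two ambient soliton structures of Example~2 into the second variation formula of Theorem~\ref{SVF} and observing that $\overline{\Ric}_f=-g_0<0$ for the expander potential and $\overline{\Ric}_f=0$ for the (linear) translator potential, so the integrand is pointwise non-negative. The only cosmetic discrepancy is the normalization of the translator potential ($f(z)=2\langle z,T\rangle$ in the paper's Example~2 versus $\langle z,T\rangle$ in your write-up), which does not affect the vanishing of $\overline{\Hess}f$ or the conclusion.
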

\noindent The stability of soliton solutions to mean curvature flow under certain weighted volume functional was first studied by Colding--Minicozzi {\cite{CM}} for shrinking solitons in hypersurface case, and generalized to higher codimensional case by Andrews--Li--Wei {\cite{ALW}}, Arezzo--Sun {\cite{AS}}, and Lee--Lue {\cite{LL}}, but note that their functional (``entropy'') is different from the $f$-volume functional. For stability of translating solitons, the second variation formula for translating hypersurfaces under $f$-volume was obtained by Xin in {\cite{Xin}}, Shahriyari {\cite{S}} studied the stability of graphical translating surfaces in $\mathbf{R}^{3}$, and Yang {\cite{Yang}} and Sun {\cite{Sun}} studied the Lagrangian translating solitons. In particular, Yang {\cite{Yang}} proved that every Lagrangian translating soliton is Hamiltonian $f$-stable, and Sun {\cite{Sun}} showed that they are actually Lagrangian $f$-stable. 

\subsection{$f$-special Lagrangians and Translating Solitons}
Next, we will focus on the case $c=0$, so we let $(X, J, \overline{\omega}, f)$ be a gradient steady KR soliton. In {\cite{RB}}, Bryant shows that there is a holomorphic volume form $\Omega_{f}$ such that $(X, J, \overline{\omega}, \Omega_{f})$ becomes an almost Calabi--Yau $m$-fold. The $m$-form $\re\Omega_{f}$ is a calibration on $X$ with respect to the conformal metric $e^{-\frac{f}{m}}\overline{g}$. A key observation is that, this fact can be rephrase as that $\re\Omega_{f}$ is a calibration with respect to the $f$-volume $e^{-f}dV_{\overline{g}}$ on $X$, and hence any submanifold calibrated by $\re\Omega_{f}$ minimize the $f$-volume. We call such submanifolds \textit{$f$-special Lagrangians ($f$-SLags)} and view them not only as generalizations of special Lagrangians in Calabi--Yau manifolds, but also generalizations of Lagrangian translating solitons in $\mathbb{C}^{m}$, since they evolved under LMCF by ``translation'' along the negative gradient vector field $-\frac{1}{2}\overline{\nabla}f$.

It turns out that every $f$-minimal Lagrangians in a gradient steady KR soliton $(X, J, \overline{\omega}, f)$ can be viewed as an $f$-SLag, and hence the $f$-stability also follows from this point of view. The $f$-SLag deformations can be characterized by the solutions of the $f$-harmonic $1$-form equation
\begin{align*}
d\alpha = d^{*}_{f}\alpha = 0,\quad\alpha\in\Gamma_{c}(T^{*}L).
\end{align*} 
Thus if $L$ is compact, the moduli space of $f$-SLags is a smooth manifold with dimension $b^{1}(L)$. But unfortunately we have a nonexistence result:
\begin{proposition}
There is no compact $f$-minimal Lagrangian in a gradient expanding or steady KR soliton $(X, J, \overline{\omega}, f)$.
\end{proposition}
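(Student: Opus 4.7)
The plan is to show that on a compact $f$-minimal Lagrangian $L$ an integration-by-parts identity forces $\overline{\nabla}f \equiv 0$ on $L$, and then to see that this is incompatible with the Lagrangian condition combined with the holomorphy of $\overline{\nabla}^{1,0}f$. I will implicitly assume that the soliton is non-trivial, i.e.\ $f$ is non-constant; if $f$ is constant the KR soliton reduces to K\"ahler--Einstein and the question is of a different nature.

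To carry this out, I would first compute the drift Laplacian of $F^{*}f$, where the Witten Laplacian is $\Delta_{f} = \Delta_{g} - \tfrac{1}{2}\langle \nabla(F^{*}f),\nabla\cdot\rangle$ associated to the weight $e^{-\tfrac{1}{2}F^{*}f}$. The chain rule supplies $\Delta_{g}(F^{*}f) = \mathrm{tr}_{L}\overline{\Hess}f + \langle \overline{\nabla}f,H\rangle$; the $f$-minimal condition $H = -\tfrac{1}{2}(\overline{\nabla}f)^{\perp}$ turns the second term into $-\tfrac{1}{2}|(\overline{\nabla}f)^{\perp}|^{2}$; the Hermitian property of $\overline{\Hess}f$ (which follows from $\overline{\nabla}^{1,0}f$ being holomorphic) together with the Lagrangian splitting $T_{x}X = T_{x}L \oplus JT_{x}L$ gives $\mathrm{tr}_{L}\overline{\Hess}f = \tfrac{1}{2}\overline{\Delta}f$; and tracing the soliton equation over $X$ yields $\overline{\Delta}f + S = 2mc$, where $S$ is the scalar curvature of $\overline{g}$. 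Assembling these ingredients,
\[
\Delta_{f}(F^{*}f) \;=\; mc \;-\; \tfrac{1}{2}\bigl(S + |\overline{\nabla}f|^{2}\bigr).
\]
Since $L$ is compact without boundary, $\int_{L}\Delta_{f}(F^{*}f)\,e^{-\tfrac{1}{2}F^{*}f}\,dV_{g} = 0$. I would then invoke the scalar curvature lower bound $S \ge 2mc$ on complete gradient Ricci solitons ($S \ge 0$ in the steady case by B.-L.~Chen, and the corresponding bound in the expanding case in the spirit of Pigola--Rimoldi--Setti) to conclude that the integrand is bounded above by $-\tfrac{1}{2}|\overline{\nabla}f|^{2} \le 0$, so the identity forces $\overline{\nabla}f \equiv 0$ everywhere on $L$.

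To finish, the non-triviality of the real holomorphy potential $f$ makes $\overline{\nabla}^{1,0}f$ a non-trivial holomorphic vector field, hence $Z := \{\overline{\nabla}^{1,0}f = 0\} = \{\overline{\nabla}f = 0\}$ is a proper complex analytic subvariety of $X$ of complex dimension at most $m-1$. If $L \subset Z$, then at any regular point of $Z$ lying on $L$ we would have $T_{x}L \subset T_{x}Z^{\mathrm{reg}}$, a $J$-invariant subspace; the Lagrangian identity $T_{x}L + JT_{x}L = T_{x}X$ would then upgrade this to $T_{x}Z^{\mathrm{reg}} = T_{x}X$, contradicting $\dim_{\mathbb{C}}Z \le m-1$. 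I expect the main obstacle to be the pointwise scalar curvature estimate $S \ge 2mc$ in the expanding case: Hamilton's identity $S + |\overline{\nabla}f|^{2} - 2cf = \mathrm{const}$ by itself produces only an $f$-dependent relation, so the uniform bound has to be extracted via an Omori--Yau or maximum-principle argument on a possibly non-compact complete manifold.
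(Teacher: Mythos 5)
Your argument is correct and, while it starts from the same differential identity as the paper, it finishes in a genuinely different way. The identity $\Delta_{f}(F^{*}f) = mc - \tfrac{1}{2}\bigl(\overline{R} + |\overline{\nabla}f|^{2}\bigr)$ you derive is exactly what the paper obtains case by case, in the form $\Delta f - \tfrac{1}{2}|\nabla f|^{2} = -\tfrac{1}{2}(\overline{R}+|\overline{\nabla}f|^{2})$ for steady and $\Delta f - \tfrac{1}{2}|\nabla f|^{2} - f = -m$ for expanding; your route via $\mathrm{tr}_{L}\overline{\Hess}f = \tfrac{1}{2}\overline{\Delta}f$ is just a more direct derivation of the same equation. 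From there the paper argues pointwise, evaluating at extrema of $f|_{L}$ and using Cao--Hamilton's constancy of $\overline{R}+|\overline{\nabla}f|^{2}$ in the steady case and, in the expanding case, the bound $\overline{R}\ge -2m$ together with its rigidity (minimum of $\overline{R}$ attained forces Einstein). You instead integrate the identity against the weight $e^{-\tfrac{1}{2}F^{*}f}$, which --- granted the scalar curvature lower bound $\overline{R}\ge 2mc$ ($\overline{R}\ge 0$ for steady by B.-L.~Chen, and for expanding precisely the estimate the paper already cites) --- forces $\overline{\nabla}f\equiv 0$ on $L$ in one stroke for both cases, and you close with the observation that a Lagrangian, being maximally totally real, cannot lie in the zero locus of a non-trivial holomorphic vector field. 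This buys a unified treatment of the two cases and replaces the rigidity half of the scalar curvature estimate by elementary complex geometry. Two remarks. First, the hypothesis that $f$ is non-constant (equivalently, that the soliton is not K\"ahler--Einstein) is genuinely needed --- compact minimal Lagrangians exist in compact negative K\"ahler--Einstein manifolds --- and you are right to flag it; the paper's Section~4 statement includes this hypothesis even though the version quoted above omits it. Second, in the final step it is cleaner to bypass the stratification of $Z$ entirely: if $L\subset Z$, the holomorphic component functions of $\overline{\nabla}^{1,0}f$ vanish on the maximal totally real submanifold $L$, hence on an open set by the identity theorem, hence $\overline{\nabla}^{1,0}f\equiv 0$ on the connected manifold $X$ and $f$ is constant; this sidesteps the (fixable but unaddressed) possibility that $L$ meets only the singular locus of $Z$. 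The scalar curvature bounds you invoke require $X$ complete, but that assumption is equally implicit in the paper's use of them.
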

\noindent Therefore to study the deformation theory of $f$-SLags, one needs to impose suitable asymptotic conditions. As an experiment, we study the case when $F:L\looparrowright\mathbb{C}^{m}$ is an exact Lagrangian translating soliton and assuming that the deformation is exact with weighted $L^{2}$ potential. We show that such deformation must be trivial on $L$, that is, there is no nonzero weighted $L^{2}$ $f$-harmonic function on $L$. 

\begin{proposition}
Suppose $F:L\looparrowright\mathbb{C}^{m}$ is a Lagrangian translating soliton and $u$ is a function on $L$ with $\|u\|_{L^{2}(e^{-\frac{1}{2}F^{*}f}dV_{g})} < \infty$ and $\Delta_{f}u = 0$. Then $u \equiv 0$.
\end{proposition}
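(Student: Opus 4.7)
The plan is a two-stage argument. First, a weighted Liouville-type cutoff estimate will show $u$ must be constant on each component of $L$; second, a direct computation will show the weighted volume $V_f(L):=\int_L e^{-\frac{1}{2}F^*f}dV_g$ is infinite in the nontrivial case $T\neq 0$, so the only constant lying in $L^2(e^{-\frac{1}{2}F^*f}dV_g)$ is zero. The Liouville step is standard: pick Lipschitz cutoffs $\phi_R\in C_c^\infty(L)$ with $\phi_R\equiv 1$ on the intrinsic geodesic ball $B_R(p_0)$, $\spt\phi_R\subset B_{2R}(p_0)$, and $|d\phi_R|_g\le C/R$, using completeness of $L$. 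Testing $\Delta_f u=d^*_f du=0$ against the compactly supported function $\phi_R^2 u$ in the weighted inner product gives $\int_L\langle d(\phi_R^2 u),du\rangle_g\,e^{-\frac{1}{2}F^*f}dV_g=0$; expanding $d(\phi_R^2 u)=\phi_R^2\,du+2\phi_R u\,d\phi_R$ and applying Cauchy--Schwarz absorbs the cross-term to produce
\begin{align*}
\int_L\phi_R^2|du|_g^2\,e^{-\frac{1}{2}F^*f}dV_g\le\frac{4C^2}{R^2}\|u\|^2_{L^2(e^{-\frac{1}{2}F^*f}dV_g)}.
\end{align*}
Sending $R\to\infty$ forces $du\equiv 0$, so $u$ is constant.

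For the second step, the key observation is that $F^*f$ itself is a convenient test function. Linearity of $f(z)=\langle z,T\rangle$ gives $\overline{\Hess}f\equiv 0$, so the standard trace formula together with the translator equation $H=-\tfrac{1}{2}T^\perp$ yields $\Delta_g(F^*f)=\langle\overline{\nabla}f,H\rangle|_L=-\tfrac{1}{2}|T^\perp|^2$, whence
\begin{align*}
\Delta_f(F^*f)=-\Delta_g(F^*f)+\tfrac{1}{2}|\nabla(F^*f)|_g^2=\tfrac{1}{2}|T^\perp|^2+\tfrac{1}{2}|T^\top|^2=\tfrac{1}{2}|T|^2,
\end{align*}
a positive constant. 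Assume for contradiction $V_f(L)<\infty$. Testing this identity against $\phi_R^2$ in the weighted inner product and integrating by parts gives
\begin{align*}
\tfrac{|T|^2}{2}\int_L\phi_R^2\,e^{-\frac{1}{2}F^*f}dV_g=2\int_L\phi_R\langle d\phi_R,d(F^*f)\rangle_g\,e^{-\frac{1}{2}F^*f}dV_g.
\end{align*}
Since $|d(F^*f)|_g\le|T|$ and $|d\phi_R|_g\le C/R$, the right-hand side is bounded in absolute value by $\tfrac{2C|T|}{R}V_f(L)\to 0$, while the left-hand side tends to $\tfrac{|T|^2}{2}V_f(L)>0$ by monotone convergence---a contradiction. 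Hence $V_f(L)=\infty$, and combined with the previous step $u\equiv 0$.

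The main obstacle is the second step rather than the Liouville estimate. The cutoff argument only yields constancy, and nothing in the weighted-$L^2$ setup on its own rules out nonzero constants; one must show concretely that $V_f(L)=\infty$. This is where the translator structure is essential: the combination of the equation $H=-\tfrac{1}{2}T^\perp$ with the linearity of $f$ pins $\Delta_f(F^*f)$ to the strictly positive constant $\tfrac{1}{2}|T|^2$, which is exactly what is needed so that a routine cutoff integration-by-parts turns any assumed finite weighted volume into a contradiction.
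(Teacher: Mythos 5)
Your argument is correct, and its first half coincides with the paper's: the paper also runs the cutoff/Caccioppoli estimate (phrased via $\Delta_f(u^2)=2|\nabla u|^2$ and $\mathrm{div}(e^{-f/2}\phi_R^2\nabla u^2)$, which is the same computation as testing against $\phi_R^2u$) to conclude $u$ is constant. Where you genuinely diverge is the second half, ruling out nonzero constants. The paper shows $\Delta_f e^{f/4}\le-\tfrac14 e^{f/4}$ using the translator identities $\Delta f=-2|H|^2$ and $|H|^2+\tfrac14|\nabla f|^2=1$, deduces the spectral gap $\lambda_1(\Delta_f)\ge\tfrac14$ by a Barta-type lemma, and then cites Vieira's theorem that a positive bottom of the weighted spectrum forces infinite weighted volume. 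You instead observe that $\Delta_f(F^*f)$ is a nonzero constant (by linearity of $f$ plus the translator equation) and pair it against $\phi_R^2$: if $V_f(L)<\infty$ the boundary term vanishes as $R\to\infty$ while the left side converges to a positive multiple of $V_f(L)$, a contradiction. Your route is more elementary and entirely self-contained, using only that $F^*f$ has bounded gradient and constant-sign weighted Laplacian bounded away from zero; the paper's route costs two external citations but yields the quantitative spectral gap $\lambda_1(\Delta_f)\ge\tfrac14$ as a byproduct, which is of independent interest (e.g.\ it reappears in stability discussions). Two cosmetic remarks: you use the normalization $f(z)=\langle z,T\rangle$, $H=-\tfrac12T^{\perp}$, whereas the proposition in the paper fixes $f(z)=2\langle z,T\rangle$, $H=-T^{\perp}$, $|T|=1$ --- this only changes the constant $\Delta_f(F^*f)=2|T|^2$ versus $\tfrac12|T|^2$ and affects nothing; and both your proof and the paper's implicitly require $L$ complete (for the cutoffs) and $T\ne0$, which you correctly flag.
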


To study the deformation theory of Lagrangian translating solitons further, one needs to impose more complicated asymptotic conditions and study the Fredholm theory of $\Delta_{f}$ in the corresponding weighted spaces. On the other hand, the properties of $f$-harmonic functions on noncompact $f$-minimal submanifolds might be useful for describing the topology at infinity. See {\cite{IR2}}, {\cite{IR1}} for some results in this direction in hypersurface case.

This paper is organized as follows. In Section $\ref{pre}$ we introduce the K\"ahler manifolds with real holomorphy potentials and $f$-minimal Lagrangian submanifolds. In Section $\ref{secSVF}$ we prove the second variation formula for the $f$-volume and stability of solitons for LMCF. We study $f$-calibrated submanifolds and prove a noncompact result in Section $\ref{secCali}$. In the final section, some generalizations and related problems are discussed.

\subsection*{Acknowledgements} Part of this paper were done when the author was visiting the Mathematics Institute in University of Oxford as a Recognised Student supervised by Professor Dominic Joyce, from January 2017 to June 2017. The author wants to express his gratitude to the Institute for the hospitality and to Professor Joyce for his kind advices and suggestions. He would also like to thank his advisor Professor Yng-Ing Lee for her constant encouragements and supports, and Professor Jason Lotay for the useful comments. The author is supported by MOST project 105-2115-M-002-004-MY3, and his visit to Oxford was also supported by National Center for Theoretical Sciences and Professor Dominic Joyce.

\section{Preliminaries}\label{pre}
\subsection{K\"ahler Manifolds with Real Holomorphy Potentials}
In the following, $(X, J)$ will be a smooth, connected, complex manifold with $\dim_{\mathbb{R}}X = 2m$, and $\overline{\omega}$ will be a K\"ahler form with K\"ahler metric $\overline{g}$. The Levi-Civita connection of $\overline{g}$ will be denoted by $\overline{\nabla}$, and the corresponding quantities with respect to $\overline{\nabla}$, such as Hessian and curvature, will be denoted by notations with overline.  

We will assume that there exists a function $f:X\to\mathbb{R}$ such that 
\begin{align}
\overline{\Hess}f(JX, JY) = \overline{\Hess}f(X, Y),
\end{align}
where $\overline{\Hess}$ is the Hessian of $f$ with respect to $\overline{g}$.
In fact, it is not hard to see that the following conditions are equivalent.
\begin{proposition}
Let $f:X\to\mathbb{R}$ be a smooth function. The following are equivalent:
\begin{enumerate}
\item[(i)] $\overline{\nabla}^{1,0}f$ is a holomorphic vector field,
\item[(ii)] $\overline{\Hess}f(JX, JY) = \overline{\Hess}f(X, Y)$ for any $X, Y\in\Gamma(TX)$,
\item[(iii)] $J\overline{\nabla}f$ is a Killing vector field.
\end{enumerate}
\end{proposition}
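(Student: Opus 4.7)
The plan is to establish (i) $\Leftrightarrow$ (ii) and (ii) $\Leftrightarrow$ (iii) by direct Lie-derivative computations, exploiting the K\"ahler identity $\overline{\nabla}J = 0$ throughout so that covariant differentiation and $J$ commute.

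I would start with the easier direction (ii) $\Leftrightarrow$ (iii). Using the standard formula $(\mathcal{L}_{V}\overline{g})(X,Y) = \overline{g}(\overline{\nabla}_{X}V, Y) + \overline{g}(X, \overline{\nabla}_{Y}V)$ for any vector field $V$, specializing to $V = J\overline{\nabla}f$, pulling $J$ past $\overline{\nabla}$ via $\overline{\nabla}J = 0$, and applying the antisymmetry $\overline{g}(JA, B) = -\overline{g}(A, JB)$ yields
\begin{align*}
(\mathcal{L}_{J\overline{\nabla}f}\overline{g})(X, Y) = -\overline{\Hess}f(X, JY) - \overline{\Hess}f(Y, JX).
\end{align*}
Symmetry of the Hessian together with the substitution $Y \mapsto JY$ (and $J^{2} = -I$) converts the vanishing of this expression into the $J$-invariance statement in (ii).

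For (i) $\Leftrightarrow$ (ii), I would invoke (or quickly verify) the standard fact that for a real vector field $\xi$ on a complex manifold, the $(1,0)$ field $\xi - iJ\xi = 2\,\xi^{1,0}$ is holomorphic if and only if $\mathcal{L}_{\xi}J = 0$; applied to $\xi = \overline{\nabla}f$ this reduces (i) to $\mathcal{L}_{\overline{\nabla}f}J = 0$. Using torsion-freeness and $\overline{\nabla}J = 0$ the terms involving $\overline{\nabla}_{\overline{\nabla}f}$ cancel and one obtains
\begin{align*}
(\mathcal{L}_{\overline{\nabla}f}J)(X) = J\overline{\nabla}_{X}\overline{\nabla}f - \overline{\nabla}_{JX}\overline{\nabla}f.
\end{align*}
Pairing with $Y$ via $\overline{g}$ produces the same expression $-\overline{\Hess}f(X, JY) - \overline{\Hess}f(JX, Y)$ that appeared above, so this vanishes for all $X,Y$ precisely when (ii) holds.

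The only non-routine point, and what I view as the main obstacle, is the lemma identifying holomorphy of $\overline{\nabla}^{1,0}f = \tfrac{1}{2}(\overline{\nabla}f - iJ\overline{\nabla}f)$ with $\mathcal{L}_{\overline{\nabla}f}J = 0$, since it bridges a complex-analytic condition and a real tensorial one. If I need to prove it from scratch rather than cite it, I would work in local holomorphic coordinates $(z^{1},\dots,z^{m})$, write $\overline{\nabla}^{1,0}f = Z^{k}\partial_{z^{k}}$, and check that the Cauchy--Riemann equations $\partial_{\bar{z}^{j}}Z^{k} = 0$ are exactly the real and imaginary components of $\mathcal{L}_{\overline{\nabla}f}J = 0$; equivalently, I would note that the flow of $\overline{\nabla}f$ preserves $J$ iff its complexified flow extends to a one-parameter family of biholomorphisms, generated precisely by the $(1,0)$ field $\overline{\nabla}^{1,0}f$. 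Once this is in hand, everything else is bookkeeping.
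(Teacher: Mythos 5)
Your argument is correct, and all three computations check out: the Lie-derivative formula for $\mathcal{L}_{J\overline{\nabla}f}\overline{g}$, the reduction of $\mathcal{L}_{\overline{\nabla}f}J$ to $J\overline{\nabla}_{X}\overline{\nabla}f - \overline{\nabla}_{JX}\overline{\nabla}f$, and the substitution $Y\mapsto JY$ that converts $\overline{\Hess}f(X,JY)+\overline{\Hess}f(JX,Y)=0$ into the $J$-invariance in (ii). The paper itself offers no proof (it merely asserts the equivalence is ``not hard to see''), so there is nothing to compare against; your write-up is the standard argument that fills this gap, and the one genuinely non-routine ingredient --- that $\xi^{1,0}$ is holomorphic iff $\mathcal{L}_{\xi}J=0$ for a real field $\xi$ on an integrable complex manifold --- is a classical fact you are entitled to cite.
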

Such a function $f$ is called a \textit{real holomorphy potential} on $(X, J, \:\overline{\omega})$. Some properties of K\"ahler manifolds admitting real holomorphy potentials can be found in Munteanu--Wang {\cite{MW14}}, {\cite{MW15}}. Typical examples of manifolds of this kind are \textit{gradient K\"ahler--Ricci solitons}:

\begin{example}[Gradient K\"ahler--Ricci Solitons]\label{ExKRS}
{\rm Consider K\"ahler manifold $(X, J, \:\overline{\omega})$ together with a smooth function $f:X\to\mathbb{R}$ satisfying
\begin{align}\label{GKRS}
\overline{\Ric} + \overline{\Hess}f = c\cdot\overline{g},
\end{align}
for some $c\in\mathbb{R}$, where $\overline{\Ric}$ is the Ricci curvature of $\overline{g}$. Since
by K\"ahler condition we have $\overline{\Ric}(JX, JY) = \overline{\Ric}(X, Y)$, 
so $f$ must satisfy
\begin{align*}
\overline{\Hess}f(JX, JY) = \overline{\Hess}f(X, Y).
\end{align*}
Thus $f$ is a real holomorphy potential. The quadruple $(X, J, \overline{\omega}, f)$ is called a \textit{gradient K\"ahler--Ricci solitons} (\textit{KR solitons} for short). The gradient vector field $\overline{\nabla}f$ generates soliton solution to K\"ahler--Ricci flow (KRF) $\partial_{t}\overline{\omega}(t) = -\rho(\overline{\omega}(t))$, where $\rho(\overline{\omega}(t))$ is the Ricci form of $\overline{\omega}(t)$, in the following way. Define
\begin{align*}
\overline{g}(t) := \sigma(t)\varphi^{*}_{t}\overline{g},
\end{align*}
where $\sigma(t) = 1-ct$ and $\varphi_{t}$ is the flow on $X$ generated by $\frac{1}{2\sigma(t)}\overline{\nabla}f$. Then it is straightforward to verify that $\overline{\omega}(t)$ satisfies the KRF with $\overline{\omega}(0) = \overline{\omega}$ as long as $\sigma(t)>0$.

One can classify the gradient KR solitons into three classes in terms of the sign of the constant $c\in\mathbb{R}$:
\begin{description}
\item[(i)] When $c<0$, $(X, J, \overline{\omega}, f)$ is called a gradient \textit{expanding} soliton. The KRF $\overline{g}(t)$ evolves the metric $\overline{g}$ by homothetically expanding the length scale since $\sigma(t)' = -c>0$. For example, take $(X, J) = (\mathbb{C}^{m}, i)$ with Euclidean metric $\overline{\omega_{0}}$, and $f(z): = -\frac{|z|^{2}}{2}$, then $c = -1$. The resulting expanding soliton $(\mathbb{C}^{m}, i, \overline{\omega_{0}}, f)$ is called the expanding Gaussian soliton.

\item[(ii)] When $c>0$, $(X, J, \overline{\omega}, f)$ is called a gradient \textit{shrinking} soliton. The KRF $\overline{g}(t)$ evolves the metric $\overline{g}$ by homothetically shrinking the length scale since $\sigma(t)' = -c<0$. For example, take $(X, J) = (\mathbb{C}^{m}, i)$ with Euclidean metric $\overline{\omega_{0}}$, and $f(z): = \frac{|z|^{2}}{2}$, then $c = 1$. The resulting shrinking soliton $(\mathbb{C}^{m}, i, \overline{\omega_{0}}, f)$ is called the shrinking Gaussian soliton.

\item[(iii)] When $c = 0$, $(X, J, \overline{\omega}, f)$ is called a gradient \textit{steady} soliton. The KRF $\overline{g}(t)$ evolves the metric $\overline{g}$ by holomorphic reparametrizations $\varphi_{t}$. For example, take $(X, J) = (\mathbb{C}^{m}, i)$ with Euclidean metric $\overline{\omega_{0}}$, and $f(z): = \langle z, T\rangle$ with some fixed vector $T\in\mathbb{C}^{m}$, then  
$(\mathbb{C}^{m}, i, \overline{\omega_{0}}, f)$ is a steady soliton structure on $\mathbb{C}^{m}$.
\end{description}

Note that if $f$ is constant, condition $(\ref{GKRS})$ reduces to K\"ahler-Einstein condition. Hence gradient KR solitons can be viewed as generalizations of K\"ahler-Einstein manifolds.
}
\end{example}

\subsection{$f$-minimal Lagrangian Submanifolds}
Given a K\"ahler manifold with holomorphy potential $(X, J, \:\overline{\omega}, f)$.
Let $F:L\looparrowright X$ be an immersed, oriented, connected submanifold. The induced metric will be denoted by $g:= F^{*}\overline{g}$, and the corresponding quantities, such as Levi-Civita connection and curvature of $g$, will be denoted by notations without overline. Define the $f$-volume functional on the space of $p$-submanifolds by
\begin{align*}
(F:L\looparrowright X)\longmapsto V_{f}(F(L)):=\int_{L}e^{-\frac{p}{2m}F^{*}f}dV_{g}.
\end{align*}
Let $\{F_{t}:L\looparrowright X\}_{t\in(-\epsilon, \epsilon)}$ be a compactly supported normal variational of $F:L\looparrowright X$, with $F_{0} = F$ and $\frac{d}{dt}\big|_{t = 0}F_{t} = \xi\in\Gamma_{c}(NL)$, then the first variation formula of the $f$-volume is given by
\begin{align}\label{FV}
\frac{d}{dt}V_{f}(F_{t}(L))\big|_{t=0} = -\int_{L}\langle\: H + \frac{p}{2m}\overline{\nabla}f\:, \xi\:\rangle\;e^{-\frac{p}{2m}F^{*}f}dV_{g},
\end{align}
where $H$ is the mean curvature vector of $F:L\looparrowright X$, and $(\:\cdot\:)^{\perp}$ is projection to the normal bundle of $L$.
\begin{definition}
A $p$-submanifold $F: L\looparrowright X$ is called $f$-minimal if $H + \frac{p}{2m}(\overline{\nabla}f)^{\perp} = 0$.
\end{definition}
If $f$ is constant, $f$-minimal submanifolds are minimal submanifolds. Hence $f$-minimal submanifolds can be viewed as generalizations of minimal submanifolds.

In the following, we will consider only \textit{Lagrangian submanifolds}. Recall that
an $m$-submanifold $F:L^{m}\looparrowright X$ in a symplectic manifold $(X^{2m},\:\overline{\omega})$ is called \textit{Lagrangian} if $F^{*}\overline{\omega} = 0$. If $F:L\looparrowright X$ is Lagrangian, any compatible almost complex structure $J$ on $X$ gives rise to an isomorphism between normal bundle $NL$ and tangent bundle $TL$ of $L$. Then by composing with the induced metric $g$ we obtain an isomorphism between $NL$ and $T^{*}L$. We will use the same notation as in \cite{Oh} to denote such isomorphism.
\begin{definition}
Let $F:L\looparrowright X$ be a Lagrangian submanifold. Define an isomorphism $\widetilde{\omega}: NL\to T^{*}L$ by
\begin{align}\label{iso}
\widetilde{\omega}(\xi) := F^{*}(\iota_{\xi}\overline{\omega}),\;\;\;\xi\in\Gamma(NL).
\end{align}
\end{definition}
In a K\"ahler manifold, Dazord {\cite{Da}} shows that the mean curvature vector $H$ of a Lagrangian submanifold $F:L\looparrowright X$ satisfies $d\widetilde{\omega}(H) = F^{*}\overline{\rho}$, where $\overline{\rho} = \overline{\Ric}(J\cdot, \cdot)$ is the Ricci form. Thus if $(X, J, \overline{\omega})$ is K\"ahler-Einstein, then $\widetilde{\omega}(H)$ is closed, so it induces an infinitesimal  Lagrangian deformation. Furthermore, Smoczyk {\cite{Smo}} shows that the Lagrangian condition is preserved by the mean curvature flow $\frac{d}{dt}F_{t} = H(F_{t})$ whenever $(X, J, \overline{\omega})$ is K\"ahler-Einstein. Therefore, it is reasonable to consider \textit{Lagrangian mean curvature flow} (LMCF) in K\"ahler-Einstein manifolds and soliton solutions for LMCF.

The next example explains the meaning of LMCF solitons and shows that they can be viewed as $f$-minimal Lagrangian submanifolds in $\mathbb{C}^{m}$.
\begin{example}[Soliton Solutions for LMCF]
{\rm Consider $(X, J) = (\mathbb{C}^{m}, i)$, $\overline{\omega} = $ Euclidean metric $\overline{\omega_{0}}$. 
\begin{description}
\item[(i)] Define $f(z):= \frac{|z|^{2}}{2}$, then $f$ is a real holomorphy potential and $\overline{\nabla}f(z) = z$. Then any $f$-minimal Lagrangian submanifold $F:L\looparrowright\mathbb{C}^{m}$ satisfies
\begin{align}\label{shrinker}
H + \frac{1}{2}F^{\perp} = 0.
\end{align}
Lagrangian submanifolds in $\mathbb{C}^{m}$ satisfying $(\ref{shrinker})$ are called \textit{shrinking soliton} for LMCF. Indeed, the homothetically shrinking family about the origin $\{F_{t} = \sqrt{1-t}F\}_{t<1}$ satisfies LMCF with $F_{0} = F$.
\item[(ii)] Similarly, the $f$-minimal Lagrangian submanifolds in $\mathbb{C}^{m}$ with $f(z):=-\frac{|z|^{2}}{2}$ satisfies
\begin{align}\label{expander}
H - \frac{1}{2}F^{\perp} = 0.
\end{align}
Lagrangian submanifolds in $\mathbb{C}^{m}$ satisfying $(\ref{expander})$ are called \textit{expanding soliton} for LMCF. Indeed, the homothetically expanding family about the origin $\{F_{t} = \sqrt{1+t}F\}_{t>-1}$ satisfies LMCF with $F_{0} = F$.
\item[(iii)] Let $T\in\mathbb{C}^{m}$ be a fixed vector, define $f(z):= 2\langle z, T\rangle$. Then $\overline{\nabla}f = 2T$. The $f$-minimal Lagrangian submanifolds in $\mathbb{C}^{m}$ satisfies
\begin{align}\label{translator}
H + T^{\perp} = 0.
\end{align}
Lagrangian submanifolds in $\mathbb{C}^{m}$ satisfying $(\ref{translator})$ are called \textit{translating soliton} for LMCF. Indeed, the family moving by translation in $T$-direction $\{F_{t} = F - tT\}_{t\in\mathbb{R}}$ satisfies LMCF with $F_{0} = F$.
\end{description}
}
\end{example}

\section{Second Variation Formula and Stability of $f$-minimal Lagrangian Submanifolds}\label{secSVF}
First we introduce the differential operators that will be used in the following sections. 
Let $(X, J, \overline{\omega}, f)$ be a K\"ahler manifold with a real holomorphic potential and $F:L\looparrowright X$ be a Lagrangian submanifold with induced metric $g$. To simplify the notations, we will continue to use $f$ to denote the restriction of the ambient function $f$ to $L$. Consider the space of weighted $L^{2}$ differential forms $L^{2}(\Lambda^{*}T^{*}L, e^{-\frac{f}{2}}dV_{g})$ on $L$ with inner product
\begin{align*}
(\:\cdot\:,\:\cdot\:)_{f} := \int_{L}\langle\:\cdot\:,\:\cdot\:\rangle_{g}\:e^{-\frac{f}{2}}dV_{g}.
\end{align*}
Then the formal adjoint of $d$ with respect to $(\cdot,\cdot)_{f}$ is given by $d^{*}_{f} := d^{*}_{g} + \frac{1}{2}\iota_{\nabla f}$. Define
\begin{align*}
\Delta_{f} : = d\:d^{*}_{f} + d^{*}_{f}\:d,
\end{align*}
then $\Delta_{f}$ is a positive-definite self-adjoint operator with respect to $(\cdot, \cdot)_{f}$. This operator is usually called the \textit{Witten Laplacian} associated to $f$.

We are now ready to prove the second variation formula for $V_{f}$.
\begin{theorem}\label{SVF}
Assume that $(X, J, \overline{\omega}, f)$ is a K\"ahler manifold with a real holomorphy potential, and $F: L\looparrowright X$ is an $f$-minimal Lagrangian submanifold. Then for any compactly supported normal variation $\{F_{t}\}_{t\in(-\epsilon, \epsilon)}$ with $F_{0} = F$ and $\frac{d}{dt}\big|_{t=0}F_{t} = \xi\in\Gamma_{c}(NL)$, we have
\begin{align}
(\delta^{2}V_{f})_{F(L)}(\xi) = \int_{L}\{\:\langle\:\widetilde{\omega}^{-1}\circ\Delta_{f}\circ\widetilde{\omega}(\xi),\: \xi\:\rangle - [\:\overline{\Ric}(\xi, \xi) + \overline{\Hess}f(\xi,\xi)\:]\:\}\;e^{-\frac{f}{2}}dV_{g}.
\end{align}
\end{theorem}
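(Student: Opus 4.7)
The plan is to differentiate the first variation formula (FV) once more at $t=0$ and organize the result in Chen-Oh form, but with the weighted measure $e^{-f/2}dV_g$. By tangential reparametrization I may assume the variation $F_t$ satisfies $\xi_t = \partial_t F_t \perp F_t(L)$ for all $t$, since $(\delta^2 V_f)$ depends only on $\xi^\perp$. At the $f$-minimal critical point $H + \tfrac{1}{2}(\overline{\nabla}f)^\perp = 0$, so the integrand of (FV) vanishes pointwise at $t=0$, and all contributions from differentiating the weighted measure or the test field drop out. What remains is the normal component of the linearization of the generalized mean curvature operator:
\[
(\delta^2V_f)_{F(L)}(\xi) = -\int_L \left\langle \tfrac{d}{dt}\bigg|_0\!\bigl(H_t + \tfrac{1}{2}(\overline{\nabla}f)^{\perp_t}\bigr),\, \xi\right\rangle e^{-F^*f/2}\,dV_g.
\]

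I would then evaluate the two pieces separately. For $\tfrac{d}{dt}|_0 H_t$ I follow Chen-Oh: combining the isomorphism $\widetilde{\omega}:NL\to T^*L$ with Dazord's identity $d\widetilde{\omega}(H_t) = F_t^*\overline{\rho}$ and the standard formula for the derivative of the induced volume form, I obtain after applying $\widetilde{\omega}$ an expression of Hodge-Laplacian type, $\Delta\alpha_\xi - \overline{\Ric}(\xi,\cdot)^\sharp$, together with correction terms involving $H_0\ne 0$ which record the failure of minimality. For the second piece, $\tfrac{d}{dt}|_0(\overline{\nabla}f)^{\perp_t}$ splits into $(\overline{\nabla}_\xi\overline{\nabla}f)^\perp$ plus a second-fundamental-form correction from the varying normal projector. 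Pairing with $\xi$ produces the pointwise scalar $\overline{\Hess}f(\xi,\xi)$. The real holomorphy potential condition $\overline{\Hess}f(J\cdot,J\cdot)=\overline{\Hess}f(\cdot,\cdot)$ is what guarantees that the tangential/normal exchange induced by $\widetilde{\omega}$ sends the remaining $\overline{\Hess}f$-corrections to the clean form $\tfrac{1}{2}\iota_{\nabla f}$ on the 1-form side.

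To finish, I integrate by parts against $e^{-f/2}dV_g$. By construction of $d^*_f = d^*_g + \tfrac{1}{2}\iota_{\nabla f}$, the Hodge Laplacian produced in the previous step combined with the $\iota_{\nabla f}$-type corrections reassemble exactly into the Witten Laplacian $\Delta_f = dd^*_f + d^*_fd$ applied to $\alpha_\xi = \widetilde{\omega}(\xi)$, while the scalar Ricci correction absorbs $\overline{\Hess}f(\xi,\xi)$ to become $\overline{\Ric}_f(\xi,\xi)$. A final integration by parts in the weighted $L^2$ inner product then rewrites $\langle\Delta_f\alpha_\xi,\alpha_\xi\rangle_f$ as $|d\alpha_\xi|^2 + |d^*_f\alpha_\xi|^2$, recovering formula (SV) of Theorem \ref{Thm1}.

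The main obstacle I anticipate is bookkeeping of cross-terms coming from the fact that $H_0 = -\tfrac{1}{2}(\overline{\nabla}f)^\perp$ does not vanish. Unlike Chen-Oh's minimal setting, several terms involving $H_0$ contracted against the second fundamental form appear during the differentiation of $H_t$, and they must combine with the correction from the varying normal projector acting on $\overline{\nabla}f$ so that only $\iota_{\nabla f}$-type first-order terms and the Bakry-\'Emery scalar correction survive. Verifying this algebraic cancellation cleanly, using both the K\"ahler identities and the $J$-Hermiticity of $\overline{\Hess}f$, is where the bulk of the calculation lies.
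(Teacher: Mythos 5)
Your plan is essentially the paper's proof: differentiate the first variation formula once more at the $f$-minimal point, linearize the generalized mean curvature via the Chen--Oh/Simons computation together with Oh's lemma identifying $\Delta^{N}$ with the Hodge Laplacian under $\widetilde{\omega}$, and use the $J$-Hermiticity of $\overline{\Hess}f$ to combine the $\xi$- and $J\xi$-direction Hessian contributions into $\overline{\Hess}f(\xi,\xi)$ before reassembling the Witten Laplacian. Your ``varying normal projector'' correction is exactly the paper's retained term $\langle\tfrac{1}{2}\nabla f,\overline{\nabla}_{\xi}\xi\rangle$ (which survives because $H+\tfrac{1}{2}\overline{\nabla}f=\tfrac{1}{2}\nabla f$ is tangential but nonzero), so the two bookkeepings agree.
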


\begin{proof}
Differentiate ($\ref{FV}$) again and use the $f$-minimal condition $H + \frac{1}{2}(\overline{\nabla}f)^{\perp} = 0$,
\begin{align*}
\frac{d^{2}}{dt^{2}}V_{f}(L_{t})\big|_{t=0} = -\int_{L}[\:\langle\:\overline{\nabla}_{\xi}(H + \frac{1}{2}\overline{\nabla}f), \:\xi
\:\rangle + \langle\:\frac{1}{2}\nabla f, \overline{\nabla}_{\xi}\xi\:\rangle\:]\;e^{-\frac{f}{2}}dV_{g}.
\end{align*}
By the same computation as in the second variation formula for minimal submanifolds,
\begin{align*}
\langle\:\overline{\nabla}_{\xi}H, \:\xi\:\rangle = \langle\:\Delta^{N}\xi -\mathcal{R}(\xi) + \tilde{A}(\xi), \:\xi\:\rangle,
\end{align*}
where $\langle\:\mathcal{R}(\xi), \:\xi\:\rangle = \sum_{i=1}^{m}\langle\:\overline{R}(e_{i}, \xi)e_{i}, \:\xi\:\rangle$ for any orthonormal basis $\{e_{i}\}$ on $L$ and $\tilde{A} = A^{t}A$ for $A$ denoting the second fundamental form. We also compute
\begin{align*}
\langle\:\overline{\nabla}_{\xi}\overline{\nabla}f, \:\xi\:\rangle = \overline{\Hess}f(\xi, \xi),
\end{align*}
and
\begin{align*}
\langle\:\nabla f, \overline{\nabla}_{\xi}\xi\:\rangle = -\langle\:\overline{\nabla}_{\xi}\nabla f, \:\xi \:\rangle = -\langle\:\overline{\nabla}_{\nabla f}\xi, \:\xi\:\rangle,
\end{align*}
where in the last equality we use the fact that $\langle\:[\xi, \nabla f], \:\xi\:\rangle = 0$.
Combining these three terms we get
\begin{align*}
\frac{d^{2}}{dt^{2}}V_{f}(L_{t})\big|_{t=0} = \int_{L}[\:\langle\:(-\Delta^{N}\xi +\mathcal{R}(\xi) - \tilde{A}(\xi)) +  \frac{1}{2}\overline{\nabla}_{\nabla f}\xi, \:\xi\:\rangle - \frac{1}{2}\overline{\Hess}f(\xi, \xi)\:]\;e^{-\frac{f}{2}}dV_{g}.
\end{align*}
By Gauss formula and $f$-minimality, given any orthonormal basis $\{e_{i}\}$ on $L$,
\begin{align*}
&\langle\:\mathcal{R}(\xi), \:\xi\:\rangle = -\sum_{i=1}^{m}\langle\:\overline{R}(e_{i}, \xi)\xi, \:e_{i}\:\rangle\\
 &= -\overline{\Ric}(\xi,\xi) + \sum_{i=1}^{m}\langle\:\overline{R}(Je_{i}, \xi)\xi, \:Je_{i}\:\rangle\\
 &= -\overline{\Ric}(\xi,\xi) + \sum_{i=1}^{m}\langle\:\overline{R}(e_{i}, J\xi)J\xi, \:e_{i}\:\rangle\\
 &= -\overline{\Ric}(\xi,\xi) + \sum_{i=1}^{m}\langle\: R(e_{i}, J\xi)J\xi, \:e_{i}\:\rangle + \langle\: B(e_{i}, J\xi), B(e_{i}, J\xi)\:\rangle- \langle\: B(e_{i}, e_{i}), B(J\xi, J\xi)\:\rangle\\
 &= -\overline{\Ric}(\xi,\xi) + \Ric(J\xi, J\xi) + \langle\:\tilde{A}\xi, \:\xi\:\rangle - \frac{1}{2}\langle\:\overline{\nabla}_{J\xi}(\overline{\nabla}f)^{\perp}, \:J\xi\:\rangle. 
\end{align*}

On the other hand, we have the following relation proved by Oh ({\cite{Oh}}, Lem.3.3):
\begin{lemma}\label{Lem}
For any $\xi\in\Gamma(NL)$ we have
\begin{enumerate}
\item[(i)] $\nabla_{X}(\widetilde{\omega}(\xi)) = \widetilde{\omega}((\overline{\nabla}_{X}\xi)^{\perp})$ for all $X\in\Gamma(TL)$, and
\item[(ii)] $\Delta^{N}\xi = \widetilde{\omega}^{-1}\circ\Delta\circ\widetilde{\omega}(\xi)$, where $\Delta$ is the covariant Laplacian acting on $\Gamma(T^{*}L)$.
\end{enumerate}
\end{lemma}
\noindent By Lemma $\ref{Lem}$ (ii) and Weizenb\"ock formula we have
\begin{align*}
\Delta^{N}\xi = -\widetilde{\omega}^{-1}\circ(\Delta_{h} - \Ric)\circ\widetilde{\omega}(\xi),
\end{align*}
where $\Delta_{h} = dd^{*}_{g} + d^{*}_{g}d $ is the Hodge Laplacian.
Now the $f$-Laplacian acting on $1$-forms on $L$ is given by
\begin{align*}
\Delta_{f} =  d\:d^{*}_{f} + d^{*}_{f}\:d = \Delta_{h} + \frac{1}{2}\mathcal{L}_{\nabla f},
\end{align*} 
so expressing the Lie derivative by covariant derivative we get
\begin{align*}
\langle\:\widetilde{\omega}^{-1}\circ(\Delta_{h} - \Ric)\circ\widetilde{\omega}(\xi), \:\xi\:\rangle &= \langle\:\widetilde{\omega}^{-1}\circ(\Delta_{f} - \frac{1}{2}\mathcal{L}_{\nabla f} - \Ric)\circ\widetilde{\omega}(\xi),\:\xi\:\rangle\\
&= \langle\:\widetilde{\omega}^{-1}\circ(\Delta_{f} - \frac{1}{2}\nabla_{\nabla f})\circ\widetilde{\omega}(\xi), \:\xi\:\rangle - \frac{1}{2}\langle\:\nabla_{J\xi}\nabla f, \:J\xi\:\rangle - \Ric(J\xi, J\xi)\\
&= \langle\:\widetilde{\omega}^{-1}\circ\Delta_{f}\circ\widetilde{\omega}(\xi) - \frac{1}{2}\overline\nabla_{\nabla f}\xi, \:\xi\:\rangle - \frac{1}{2}\langle\:\nabla_{J\xi}\nabla f, \:J\xi\:\rangle - \Ric(J\xi, J\xi),
\end{align*}
where in the third line we use the Lemma $\ref{Lem}$ (i) to show that $\widetilde{\omega}^{-1}\circ\nabla_{\nabla f}\circ\widetilde{\omega} = (\overline{\nabla}_{\nabla f}\xi)^{\perp}$.

Combining everything together we finally obtain
\begin{align*}
\frac{d^{2}}{dt^{2}}V_{f}(L_{t})\big|_{t=0} = \int_{L}[\:\langle\: \widetilde{\omega}^{-1}\circ\Delta_{f}\circ\widetilde{\omega}(\xi), \:\xi\:\rangle - \overline{\Ric}(\xi, \xi) - \frac{1}{2}\overline{\Hess}f(\xi, \xi) - \frac{1}{2}\overline{\Hess}f(J\xi, J\xi)\:]\;e^{-\frac{f}{2}}dV_{g}.
\end{align*}
Notice that, by the assumption on $f$, 
\begin{align*}
\frac{1}{2}\overline{\Hess}f(\xi, \xi) + \frac{1}{2}\overline{\Hess}f(J\xi, J\xi) = \overline{\Hess}f (\xi, \xi).
\end{align*}
\end{proof}
Now we can define the notions of $f$-stability for $f$-minimal Lagrangians.
\begin{definition}
Let $(X, J, \overline{\omega}, f)$ be a K\"ahler manifold with a real holomorphy potential. An $f$-minimal Lagrangian $F:L\looparrowright X$ is called
\begin{enumerate}
\item[(i)] \textbf{f-stable} if $(\delta^{2}V_{f})_{F(L)}(\xi)\geq 0$ for all $\xi\in\Gamma_{c}(NL)$,
\item[(ii)] \textbf{Lagrangian f-stable} if $(\delta^{2}V_{f})_{F(L)}(\xi)\geq 0$ for $\xi\in\Gamma_{c}(NL)$ with $\widetilde{\omega}(\xi)$ being closed, and
\item[(iii)] \textbf{Hamiltonian f-stable} if $(\delta^{2}V_{f})_{F(L)}(\xi)\geq 0$ for $\xi\in\Gamma_{c}(NL)$ with $\widetilde{\omega}(\xi)$ being exact.
\end{enumerate}
\end{definition}
\noindent By the same proof as in {\cite{Oh}} Theorem 3.6 and Theorem 4.4, we have
\begin{corollary}
Let $(X, J, \overline{\omega}, f)$ be a K\"ahler manifold with a real holomorphy potential with Bakry--\'Emery Ricci curvature $\overline{\Ric}_{f} := \overline{\Ric} + \overline{\Hess}f$.
\begin{enumerate}
\item[(i)] If $\overline{\Ric}_{f} \leq 0$, then any $f$-minimal Lagrangian is $f$-stable.
\item[(ii)] If $\overline{\Ric}_{f} = c\cdot\overline{g}$ with $c>0$, then any $f$-minimal Lagrangian is Hamiltonian $f$-stable if and only if $\lambda_{1}(\Delta_{f})\geq c$.
\end{enumerate}
\end{corollary}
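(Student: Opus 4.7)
The plan is to feed the second variation formula of Theorem \ref{SVF} into a direct sign analysis, exactly paralleling Oh's treatment of the minimal case. Since $\widetilde{\omega}: NL \to T^{*}L$ is a pointwise isometry (the restriction of $J$ identifies $NL$ with $TL$ isometrically for a Lagrangian, and $\widetilde{\omega}$ combines this with $g$), one has $|\xi| = |\alpha_{\xi}|$ pointwise, and integration by parts converts Theorem \ref{SVF} into the form stated as Theorem \ref{Thm1},
\begin{align*}
(\delta^{2}V_{f})_{F(L)}(\xi) = \int_{L}\bigl\{\,|d\alpha_{\xi}|^{2}_{g} + |d^{*}_{f}\alpha_{\xi}|^{2} - \overline{\Ric}_{f}(\xi, \xi)\,\bigr\}\,e^{-\frac{1}{2}F^{*}f}\,dV_{g}.
\end{align*}
Part (i) is immediate from this formula: the hypothesis $\overline{\Ric}_{f} \leq 0$ makes $-\overline{\Ric}_{f}(\xi,\xi) \geq 0$ pointwise, and the other two terms are manifestly nonnegative, so the integrand is nonnegative for every $\xi \in \Gamma_{c}(NL)$.

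For part (ii), I would substitute $\overline{\Ric}_{f} = c\overline{g}$, so the Ricci term reads $c|\alpha_{\xi}|^{2}$, and restrict to a Hamiltonian variation $\alpha_{\xi} = du$ for some function $u$ with $du$ compactly supported. Then $d\alpha_{\xi} = 0$ and $d^{*}_{f}du = \Delta_{f}u$ (because $d^{*}_{f}u$ vanishes on $0$-forms). Integration by parts using self-adjointness of $\Delta_{f}$ in the weighted inner product $(\cdot,\cdot)_{f}$ then gives
\begin{align*}
(\delta^{2}V_{f})(\xi) = \|\Delta_{f}u\|^{2}_{f} - c\,(\Delta_{f}u, u)_{f}.
\end{align*}
Expanding $u$ in an $L^{2}_{f}$-orthonormal basis of eigenfunctions $\{\phi_{k}\}$ of $\Delta_{f}$ with eigenvalues $0 = \lambda_{0} < \lambda_{1} \leq \lambda_{2} \leq \cdots$, the right-hand side becomes $\sum_{k \geq 1} a_{k}^{2}\,\lambda_{k}(\lambda_{k} - c)$, which is nonnegative for every admissible $u$ if and only if $\lambda_{k} \geq c$ for all $k \geq 1$, i.e.\ $\lambda_{1}(\Delta_{f}) \geq c$; the ``only if'' direction is witnessed by testing with $u = \phi_{1}$, which produces a variation of sign $\mathrm{sign}(\lambda_{1} - c)$.

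The main subtlety is spectral-theoretic. On a compact $L$ the spectral decomposition just invoked is routine, and $f$-minimal Lagrangians in the relevant ambient setting (shrinking KR solitons, $c > 0$) can indeed be compact, so nothing further is needed. On a noncompact $L$ one must interpret $\lambda_{1}(\Delta_{f})$ as the bottom of the spectrum of the Friedrichs extension of $\Delta_{f}$ on $C^{\infty}_{c}(L)$, replace the eigenfunction expansion by a spectral-measure argument for the forward implication, and use cutoff approximations of a Rayleigh-minimizing sequence for the converse. Apart from that caveat, the proof is a line-by-line transcription of Oh's arguments in \cite{Oh}, Theorems 3.6 and 4.4, with the Hodge Laplacian replaced by the Witten Laplacian $\Delta_{f}$ and the Riemannian volume form by the weighted form $e^{-\frac{1}{2}F^{*}f}dV_{g}$.
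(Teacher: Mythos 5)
Your argument is correct and is essentially the paper's own proof: the paper simply invokes ``the same proof as in \cite{Oh}, Theorems 3.6 and 4.4,'' which is exactly the sign analysis and weighted spectral decomposition you carry out (with $\Delta_{h}$ replaced by $\Delta_{f}$ and $dV_{g}$ by $e^{-\frac{1}{2}F^{*}f}dV_{g}$). Your remarks on the isometry property of $\widetilde{\omega}$ and on the noncompact interpretation of $\lambda_{1}(\Delta_{f})$ are accurate and, if anything, slightly more careful than the paper's one-line citation.
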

\noindent Notice that if we take $f$ to be a constant, this corollary reduces to Oh's original results. 

From Example 2 and Theorem $\ref{SVF}$, we obtain the $f$-stability for LMCF solitons.
\begin{corollary}
\begin{enumerate}
\item[(i)] Every Lagrangian expanding soliton is strictly $f$-stable.
\item[(ii)] Every Lagrangian translating soliton is $f$-stable.
\end{enumerate}
\end{corollary}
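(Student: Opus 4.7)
The plan is to apply Theorem \ref{SVF} (equivalently, the form stated in Theorem \ref{Thm1}) to the two specific soliton structures on $(\mathbb{C}^m, i, \overline{\omega_0})$ described in Example 2, and reduce each claim to a pointwise sign inspection of the integrand.

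For part (i), I take $f(z) = -|z|^2/2$, which is the expanding Gaussian soliton structure. Since $\overline{g}$ is the flat Euclidean metric, $\overline{\Ric} \equiv 0$, and a direct computation gives $\overline{\Hess} f = -\overline{g}$. Hence the Bakry--\'Emery Ricci tensor satisfies $\overline{\Ric}_f = -\overline{g}$, which is strictly negative. Plugging into formula (\ref{SV}), the integrand becomes
\begin{align*}
|d\alpha_{\xi}|^{2}_{g} + |d^{*}_{f}\alpha_{\xi}|^{2} + |\xi|^{2}_{\overline{g}},
\end{align*}
which is strictly positive whenever $\xi \not\equiv 0$. This yields strict $f$-stability.

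For part (ii), I take $f(z) = 2\langle z, T\rangle$, the translating soliton potential. Since $f$ is affine linear on Euclidean space, $\overline{\Hess} f \equiv 0$, and again $\overline{\Ric} \equiv 0$. Therefore $\overline{\Ric}_f \equiv 0$ and the second variation integrand reduces to
\begin{align*}
|d\alpha_{\xi}|^{2}_{g} + |d^{*}_{f}\alpha_{\xi}|^{2} \geq 0,
\end{align*}
so $(\delta^{2} V_f)_{F(L)}(\xi) \geq 0$ for every $\xi \in \Gamma_c(NL)$, proving $f$-stability.

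There is essentially no obstacle: both statements are immediate corollaries once Theorem \ref{SVF} is in hand, and the only thing to verify carefully is the sign and normalization of $f$ in each of the two soliton equations of Example 2, matching them to the convention used in the second variation formula (where the weight appears as $e^{-f/2}$). A minor sanity check worth recording is that in case (ii) the formula is consistent regardless of the choice of translation vector $T \in \mathbb{C}^m$, since only $\overline{\Hess} f$ enters and this vanishes identically; thus every Lagrangian translator, not merely those associated with a preferred direction, is $f$-stable.
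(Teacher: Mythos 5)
Your proposal is correct and is essentially the paper's own argument: the paper derives this corollary directly from Example 2 together with Theorem \ref{SVF} (equivalently, from the earlier corollary stating that $\overline{\Ric}_f\leq 0$ implies $f$-stability), exactly as you do by computing $\overline{\Ric}_f=-\overline{g}$ for $f(z)=-|z|^2/2$ and $\overline{\Ric}_f=0$ for $f(z)=2\langle z,T\rangle$ on flat $\mathbb{C}^m$. Your sign and normalization checks match the conventions $H+\tfrac{1}{2}(\overline{\nabla}f)^{\perp}=0$ and weight $e^{-f/2}$ used in the paper.
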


\begin{remark}\rm{
It is known that shrinking solitons for MCF are $f$-unstable, so one has to consider stability with respect to the ``entropy'' defined by Coding--Minicozzi {\cite{CM}}, called the \textit{$F$-stability}. See {\cite{LZ}} for some $F$-stability criterions for closed Lagrangian shrinking solitons.
}
\end{remark}

\section{Calibrated Submanifolds with respect to the $f$-volume}\label{secCali}
\subsection{$f$-special Lagrangian Submanifolds}
Recall that Harvey and Lawson \cite{HL} shows that if $(X, J, \overline{\omega}, \Omega)$ is a Calabi--Yau $m$-fold, then for any Lagrangian submanifold $F:L\looparrowright X$ we have
\begin{align*}
F^{*}\Omega = e^{i\theta}dV_{g}
\end{align*}
for some $\theta:L\to\mathbb{R}/2\pi\mathbb{Z}$, called the \textit{Lagrangian angle}. The mean curvature vector satisfies $H = J\nabla\theta$, thus if $F:L\looparrowright X$ is minimal, then $\theta = \theta_{0}$ is a constant. Moreover, in this case $F:L\looparrowright X$ is calibrated by $\re(e^{-i\theta_{0}}\Omega)$ and hence it is actually volume-minimizing in its homology class.

We now generalize the above theory to Lagrangian submanifolds in gradient steady KR solitons, and give an alternative description of $f$-stability for $f$-minimal Lagrangians. Given a gradient steady KR soliton $(X, J,\:\overline{\omega}, f)$, Robert Bryant {\cite{RB}} shows that there exists a nonvanishing holomorphic volume form, denoted by $\Omega_{f}$, such that
\begin{align}\label{ACY}
e^{-f}\:\frac{\overline{\omega}^{\:m}}{m!} = (-1)^{\frac{m(m-1)}{2}}\left(\frac{i}{2}\right)^{m}\Omega_{f}\wedge\overline{\Omega_{f}}.
\end{align}
In other words, $(X, J, \:\overline{\omega}, \:\Omega_{f})$ is \textit{almost Calabi--Yau} in the sense of Joyce (see {\cite{Jb2}, Def. 8.4.3}).
Define $\widetilde{g}: = e^{-\frac{f}{m}}\overline{g}$, then for any $\theta_{0}\in\mathbb{R}/2\pi\mathbb{Z}$, $\re{(e^{-i\theta_{0}}\Omega_{f})}$ is a calibration with respect to the conformal metric $\widetilde{g}$. We rephrase this from the view point of the $f$-volume.
\begin{definition}
\begin{enumerate}
\item[(i)] A $p$-form $\alpha$ on $(X, \overline{g}, f)$ is called an $f$-calibration if $d\alpha = 0$ and $\alpha\big|_{P}\leq e^{-\frac{p}{2m}f}\:vol_{P}$ for any $p$-dimensional oriented subspace $P\subset T_{x}X$, for all $x\in X$. 
\item[(ii)] A $p$-submanifold $F:L\looparrowright X$ in $X$ is said to be $f$-calibrated by an $f$-calibration $\alpha$ if 
\begin{align*}
F^{*}\alpha = e^{-\frac{p}{2m}F^{*}f}dV_{g},
\end{align*}
where $dV_{g}$ is the induced volume on $L$.
\end{enumerate}
\end{definition}
It is not hard to see that any $f$-calibrated submanifold is $f$-minimal and any compact $f$-calibrated submanifold minimizes the $f$-volume in its homology class. One can show that $\re{(e^{-i\theta_{0}}\Omega_{f})}$ is an $f$-calibration and the $f$-calibrated submanifolds are $f$-minimal Lagrangian submanifolds. Conversely, by choosing an orientation, any $f$-minimal Lagrangian submanifolds in a gradient steady KR soliton is $f$-calibrated by $\re{(e^{-i\theta_{0}}\Omega_{f})}$ for some $\theta_{0}\in\mathbb{R}/2\pi\mathbb{Z}$. 
\begin{definition}
We call the submanifolds calibrated by $\re{(e^{-i\theta_{0}}\Omega_{f})}$ \textit{f-special Lagrangian} submanifolds ($f$-SLag for short) with phase $\theta_{0}$.
\end{definition}
If $F$ is Lagrangian, then by the same method as in {\cite{HL}} one can show that $F^{*}\Omega_{f} = e^{i\theta}e^{-\frac{F^{*}f}{2}}dV_{g}$ for some $\theta\in\mathbb{R}/2\pi\mathbb{Z}$. We still call $\theta$ the \textit{Lagrangian angle}. It turns out that $F: L\looparrowright X$ is an $f$-SLag with phase $\theta_{0}$ if and only if $F: L\looparrowright X$ is Lagrangian with constant Lagrangian angle $\theta_{0}$
\begin{remark}
\rm{In Joyce's terminology (see {\cite{Jb3}, Def. 8.4.4}), $f$-SLags in our sense are still called \textit{special Lagrangians}. We put an $f$ here to emphasize the role of the real holomorphy potential and the relation to $f$-minimal submanifolds.
}
\end{remark}

We now give a family of examples of $f$-SLags in $\mathbb{C}^{m}$.
\begin{example}[Lagrangian Translating Solitons]\label{ExTrans}
{\rm Consider a Lagrangian translating soliton $F: L\looparrowright\mathbb{C}^{m}$ with Lagrangian angle $\theta$. Then as in {\cite{NT}},
\begin{align*}
\begin{array}{rcl}
H + T^{\perp} = 0 &\Longleftrightarrow & J\nabla\theta + T^{\perp} = 0\\
&\Longleftrightarrow & \nabla\theta - (JT)^{T} = 0\\
&\Longleftrightarrow & \nabla\theta - \nabla\langle F, JT\rangle = 0.
\end{array}
\end{align*}
So $F$ satisfies the \textit{translator equation} 
\begin{align}\label{TLS}
\theta(F) - \langle F, JT\rangle = \theta_{0}
\end{align}
for some constant $\theta_{0}\in\mathbb{R}$. We shall show that Lagrangian translating solitons are $f$-calibrated with phase $\theta_{0}$.

Let $\Omega:=dz^{1}\wedge\cdots\wedge dz^{m}$ and  
\begin{align*}
\Omega_{f}:=e^{-\frac{f}{2} - i\langle z, JT \rangle}\Omega,
\end{align*}
where $f(z) = 2\langle z, T\rangle$. Then $\Omega_{f}$ is a holomorphic volume form on $\mathbb{C}^{m}$ and satisfies $(\ref{ACY})$. Hence $(\mathbb{C}^{m}, i, \:\overline{\omega_{0}}, \:\Omega_{f})$ is almost Calabi--Yau. By Lagrangian condition we then have
\begin{align*}
F^{*}(\Omega_{f}) = e^{i(\theta(F) - \langle F, JT\rangle)}e^{-\frac{F^{*}f}{2}}dV_{g} = e^{i\theta_{0}}(e^{-\frac{F^{*}f}{2}}dV_{g}).
\end{align*}
Therefore Lagrangian translating solitons are $f$-SLag with phase $\theta_{0}$.\qed
}
\end{example}
When $f = 0$, $f$-SLags reduce to SLags in Calabi--Yau manifolds. Hence $f$-minimal Lagrangian submanifolds in gradient steady KR solitons are generalizations of special Lagrangians in Calabi--Yau manifolds. From Example $\ref{ExTrans}$, they can also be considered as generalizations of Lagrangian translating solitons in $\mathbb{C}^{m}$. In fact, under MCF, $f$-SLags are evolved by ``translation'' along the flow of the vector field $-\frac{1}{2}\overline{\nabla}f$ on $X$ (see section $\ref{KRMCF}$).

We would like to study the deformation theory of $f$-SLags. The equation for $f$-SLag deformation vector fields is given by the next lemma.
\begin{lemma}\label{fharlemma}
Let $(X, J, \:\overline{\omega}, f)$ be a steady KR soliton and $F:L\rightarrow X$ be an $f$-SLag. Then the $f$-SLag deformation vector fields $\xi\in\Gamma(NL)$ is characterized by the $f$-harmonic $1$-form equation
\begin{align}\label{fhar}
d\:\widetilde{\omega}(\xi) = d^{*}_{f}\:\widetilde{\omega}(\xi) = 0.
\end{align} 
\end{lemma}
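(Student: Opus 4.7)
The plan is to follow McLean's strategy for special Lagrangians in Calabi--Yau manifolds, keeping careful track of where the weight $e^{-\frac{1}{2}F^{*}f}$ enters each step. An $f$-SLag with phase $\theta_{0}$ is characterized by the pair of pointwise conditions $F^{*}\overline{\omega}=0$ (Lagrangian) and $F^{*}\im(e^{-i\theta_{0}}\Omega_{f})=0$ (constant Lagrangian angle $\theta_{0}$), so the infinitesimal $f$-SLag condition on a normal variation with initial velocity $\xi\in\Gamma(NL)$ is obtained by differentiating both equations at $t=0$. Throughout I set $\alpha:=\widetilde{\omega}(\xi)$.

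The Lagrangian half is identical to the Calabi--Yau case. Cartan's magic formula together with $d\overline{\omega}=0$ gives $\frac{d}{dt}\big|_{t=0}F_{t}^{*}\overline{\omega} = F^{*}\mathcal{L}_{\xi}\overline{\omega} = d(F^{*}\iota_{\xi}\overline{\omega}) = d\alpha$, so the linearized Lagrangian condition is exactly $d\alpha=0$.

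The crux is the phase equation. Because $\Omega_{f}$ is a holomorphic $(m,0)$-form on a complex $m$-manifold, it is automatically $d$-closed, so $\frac{d}{dt}\big|_{t=0}F_{t}^{*}\im(e^{-i\theta_{0}}\Omega_{f}) = d\bigl(F^{*}\iota_{\xi}\im(e^{-i\theta_{0}}\Omega_{f})\bigr)$. The key pointwise identity I need is
$F^{*}\iota_{\xi}\im(e^{-i\theta_{0}}\Omega_{f}) = -e^{-\frac{1}{2}F^{*}f}\,{*}\alpha$ on $L$. To prove it I pick at $p\in L$ a $\overline{g}$-orthonormal basis $e_{1},\dots,e_{m}$ of $T_{p}L$ so that $Je_{1},\dots,Je_{m}$ is orthonormal for $N_{p}L$ and the coframe $dz^{j}=e^{j}+i(Je)^{j}$ is unitary. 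Writing $\Omega_{f}=h\,dz^{1}\wedge\cdots\wedge dz^{m}$ locally, the $f$-SLag identity $F^{*}\Omega_{f}=e^{i\theta_{0}}e^{-\frac{1}{2}F^{*}f}dV_{g}$ forces $h(F(p))=e^{i\theta_{0}-f(p)/2}$. The Harvey--Lawson exterior-algebra computation (using $dz^{j}(Je_{k})=i\delta_{jk}$ and $dz^{j}|_{T_{p}L}=e^{j}$) yields $F^{*}\iota_{Je_{k}}(dz^{1}\wedge\cdots\wedge dz^{m}) = i\,{*}e^{k}$; combining with $\widetilde{\omega}(Je_{k})=-e^{k}$ and the value of $h(F(p))$ produces $F^{*}\iota_{\xi}(e^{-i\theta_{0}}\Omega_{f}) = -ie^{-\frac{1}{2}F^{*}f}\,{*}\alpha$, and the imaginary part is the claimed identity. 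Hence the linearized phase equation reads $d\bigl(e^{-\frac{1}{2}F^{*}f}\,{*}\alpha\bigr)=0$.

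Finally a short Hodge-theoretic manipulation converts this to $d^{*}_{f}\alpha=0$. Expanding the exterior derivative, using $df\wedge{*}\alpha=\langle df,\alpha\rangle_{g}\,dV_{g}$, the convention $d_{g}^{*}\alpha=-{*}d{*}\alpha$ on $1$-forms in dimension $m$, and the identity $d^{*}_{f}=d^{*}_{g}+\frac{1}{2}\iota_{\nabla f}$ recorded in Section \ref{secSVF}, one gets $d(e^{-\frac{1}{2}F^{*}f}\,{*}\alpha) = -e^{-\frac{1}{2}F^{*}f}(d^{*}_{f}\alpha)\,dV_{g}$, so the phase linearization is exactly $d^{*}_{f}\alpha=0$. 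Combined with $d\alpha=0$ this is precisely (\ref{fhar}). The main obstacle is the pointwise exterior-algebra identity for $F^{*}\iota_{\xi}\Omega_{f}$ at an $f$-SLag point; everything else is routine bookkeeping with Hodge stars and the definition of $d^{*}_{f}$.
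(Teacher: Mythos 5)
Your proof is correct and follows essentially the same McLean-type linearization as the paper: $d\alpha=0$ from the Lagrangian condition and $d^{*}_{f}\alpha=0$ from the phase condition, with the weight entering through $F^{*}\Omega_{f}=e^{i\theta_{0}}e^{-\frac{1}{2}F^{*}f}dV_{g}$. The only (cosmetic) difference is that you establish the key identity $F^{*}\iota_{\xi}\im(e^{-i\theta_{0}}\Omega_{f})=-e^{-\frac{1}{2}F^{*}f}\,{*}\alpha$ by an explicit Harvey--Lawson frame computation, whereas the paper uses $\iota_{\xi}\Omega_{f}=-i\,\iota_{J\xi}\Omega_{f}$ and manipulates $\iota_{J\xi}\bigl(e^{-\frac{1}{2}F^{*}f}dV_{g}\bigr)$ directly; both yield the same conclusion.
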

\begin{proof}
Without loss of generality, we may assume $F$ has phase $0$. Let $\{F_{t}\}$ be a family of immersions satisfies $F_{0} = F$ and $\frac{d}{dt}\big|_{t=0}F_{t} = \xi\in\Gamma(NL)$. Then $\xi$ preserves $f$-SLag condition if and only if
\begin{align*}
\frac{d}{dt}\big|_{t=0}F^{*}_{t}\overline{\omega} = \frac{d}{dt}\big|_{t=0}F^{*}_{t}\im(\Omega_{f}) = 0.
\end{align*}
It is well known that $\frac{d}{dt}\big|_{t=0}F^{*}_{t}\overline{\omega} = 0$ if and only if $d\:\widetilde{\omega}(\xi) = 0$. Then we compute
\begin{align*}
\frac{d}{dt}\big|_{t=0}F^{*}_{t}\Omega_{f} = &F^{*}\mathcal{L}_{\xi}\Omega_{f} = F^{*}(d\iota_{\xi}\Omega_{f})\\
=& -i\:d\:\iota_{J\xi}F^{*}\Omega_{f}\\
=& -i\:d(\iota_{J\xi}e^{-\frac{1}{2}F^{*}f}dV_{g})\\
=& -i\:(-\frac{1}{2}d(F^{*}f)e^{-\frac{1}{2}F^{*}f}\wedge\iota_{J\xi}dV_{g} + e^{-\frac{1}{2}F^{*}f}\:d\:\iota_{J\xi}dV_{g})\\
=& \:i\:(d^{*}_{g}\:\widetilde{\omega}(\xi) + \frac{1}{2}\iota_{\nabla F^{*}f}\widetilde{\omega}(\xi))\:e^{-\frac{1}{2}F^{*}f}dV_{g}.
\end{align*}
Therefore $\frac{d}{dt}\big|_{t=0}F^{*}_{t}\im(\Omega_{f}) = 0$ if and only if
\begin{align*}
d^{*}_{g}\:\widetilde{\omega}(\xi) + \frac{1}{2}\iota_{\nabla F^{*}f}\widetilde{\omega}(\xi) = d^{*}_{f}\widetilde{\omega}(\xi) = 0.
\end{align*}
\end{proof}
Notice that $(\ref{fhar})$ also appears in the second variation formula for $f$-volume (Theorem $\ref{SVF}$) as the Jacobi field equation for $f$-minimal Lagrangians.

From Lemma $\ref{fharlemma}$, for \textit{compact} $f$-SLags, the deformation theory is the same as special Lagrangians, as shown by the following theorem:
\begin{theorem}[{\cite{Jb3}}, Thm.10.8]
Let $(X, J, \:\overline{\omega}, \:\Omega_{f})$ be an almost Calabi--Yau $m$-fold and $F:L^{m}\looparrowright X$ be a compact $f$-SLag submanifold. Then the moduli space of $f$-SLags is a smooth manifold of dimension $b^{1}(L)$.
\end{theorem}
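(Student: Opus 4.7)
The plan is to adapt McLean's deformation theorem to this almost Calabi--Yau setting, using Lemma~\ref{fharlemma} to identify the linearized problem with a weighted Hodge problem. First I would apply Weinstein's Lagrangian neighborhood theorem to obtain a symplectomorphism $\Psi: U \to V$, where $U \subset T^*L$ is a neighborhood of the zero section with its canonical symplectic form and $V \subset X$ is a tubular neighborhood of $F(L)$, with $\Psi$ restricting to $F$ on the zero section. Every Lagrangian submanifold of $X$ sufficiently $C^1$-close to $F(L)$ then admits a unique presentation, up to reparametrization, as the image $F_\alpha(L)$ for some small closed 1-form $\alpha$ on $L$, where $F_\alpha := \Psi \circ \alpha$.

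Next I would construct the deformation map. Fix $k \geq 1$ and $a \in (0,1)$, and set
\[
\Phi(\alpha) := F_\alpha^* \im(e^{-i\theta_0}\Omega_f)
\]
on a neighborhood of $0$ in the Banach space of closed 1-forms in $C^{k+1,a}$. Since $\Omega_f$ is holomorphic, $\im(e^{-i\theta_0}\Omega_f)$ is closed, and because $F_\alpha(L)$ is homologous to $F(L)$, the class $[\Phi(\alpha)] \in H^m_{dR}(L)$ vanishes. Writing $\Phi(\alpha) = \psi(\alpha)\cdot e^{-\tfrac{1}{2}F^*f}dV_g$, the function $\psi(\alpha)$ then lies in $C^{k,a}_0(L)$, the H\"older functions of vanishing weighted mean $\int_L \psi\,e^{-\tfrac{1}{2}F^*f}dV_g = 0$. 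The zero set $\Phi^{-1}(0)$ is precisely the space of $f$-SLag perturbations of $F$; by Lemma~\ref{fharlemma}, the Fr\'echet derivative at $0$ is $D\Phi|_0(\alpha) = d^*_f \alpha$.

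To conclude I would apply the Banach-space implicit function theorem. On compact $L$, weighted Hodge theory in the inner product $(\cdot,\cdot)_f$ decomposes any closed $C^{k+1,a}$ 1-form uniquely as $\alpha = \alpha_H + d\beta$ with $\alpha_H$ $f$-harmonic and $\beta \in C^{k+2,a}(L)/\mathbb{R}$; then $d^*_f \alpha = \Delta_f \beta$. The Witten Laplacian $\Delta_f$ is self-adjoint and elliptic with kernel the constants, so by the Fredholm alternative it induces an isomorphism $\Delta_f: C^{k+2,a}(L)/\mathbb{R} \to C^{k,a}_0(L)$. Hence $D\Phi|_0$ is surjective, with kernel equal to the space of $f$-harmonic 1-forms; since each de Rham class of $L$ has a unique $f$-harmonic representative, this kernel has dimension $b^1(L)$. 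The implicit function theorem then exhibits $\Phi^{-1}(0)$ near $0$ as a smooth Banach submanifold of dimension $b^1(L)$, and elliptic regularity upgrades every element to $C^\infty$, making the moduli independent of $(k,a)$.

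The main obstacle I anticipate is the analytic bookkeeping: verifying that $\Phi$ is a smooth map between the chosen H\"older Banach spaces and that its Fr\'echet derivative at $0$ coincides with the pointwise linearization computed in Lemma~\ref{fharlemma}. A secondary point is factoring out the reparametrization freedom to produce an honest chart on the moduli space, which is handled by a standard slice argument transverse to the orbits of the diffeomorphism group of $L$.
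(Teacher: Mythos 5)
Your argument is correct and is essentially the McLean--Joyce deformation argument that the paper invokes by citation (the paper offers no proof of its own, deferring to Joyce's Theorem 10.8): Weinstein's Lagrangian neighborhood theorem, the map $\alpha\mapsto F_\alpha^*\im(e^{-i\theta_0}\Omega_f)$ with linearization $d^*_f$ as in Lemma~\ref{fharlemma}, weighted Hodge theory for $\Delta_f$, and the implicit function theorem. The only cosmetic remark is that parametrizing nearby Lagrangians by graphs of small closed $1$-forms already identifies nearby submanifolds (not maps), so the final slice argument for the diffeomorphism group is not actually needed.
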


Unfortunately, just like minimal submanifolds in Euclidean space, we have the following noncompact result for $f$-minimal Lagrangians.
\begin{proposition}
Let $(X, J,\:\overline{\omega}, f)$ be a gradient steady or expanding K\"ahler--Ricci soliton which is not K\"ahler-Einstein, and $F:L\looparrowright X$ be an $f$-minimal Lagrangian submanifold. Then $L$ must be noncompact.
\end{proposition}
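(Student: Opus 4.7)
The plan is to compute the (geometric) Laplacian of $h:=F^*f$ on the hypothetical compact $L$, integrate, and combine the result with the classical scalar-curvature bounds for gradient Ricci solitons to force $(X,J,\overline{\omega})$ to be K\"ahler--Einstein, contradicting the hypothesis.

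For the pointwise identity, the Gauss formula yields the general expression $\Delta_g h = \operatorname{tr}_L\overline{\Hess}f + \langle H,\overline{\nabla}f\rangle$. Because $L$ is Lagrangian, $\{e_i,Je_i\}$ is an orthonormal frame of $TX|_L$, so the Hermitian identity $\overline{\Ric}(J\cdot,J\cdot)=\overline{\Ric}(\cdot,\cdot)$ gives $\operatorname{tr}_L\overline{\Ric}=\tfrac12\overline{S}|_L$, where $\overline{S}$ is the ambient scalar curvature. Substituting the soliton equation $\overline{\Hess}f=c\overline{g}-\overline{\Ric}$ and the $f$-minimality $H=-\tfrac12(\overline{\nabla}f)^\perp$ collapses the formula to $\Delta_g h = cm - \tfrac12\overline{S}|_L - \tfrac12|(\overline{\nabla}f)^\perp|^2$. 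Integrating on the compact $L$ and using $\int_L\Delta_g h\,dV_g=0$ produces
\[
2cm\cdot\vol(L) \;=\; \int_L \overline{S}|_L\,dV_g + \int_L |(\overline{\nabla}f)^\perp|^2\,dV_g.
\]

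Next, the classical lower bounds for complete gradient Ricci solitons give $\overline{S}\geq 0$ in the steady case (Chen) and $\overline{S}\geq cn=2cm$ in the expanding case (Pigola--Rimoldi--Setti / Zhang), so the right-hand side is at least $2cm\cdot\vol(L)$. Equality is forced, so $\overline{S}\equiv 2cm$ on $L$ and $(\overline{\nabla}f)^\perp\equiv 0$ on $L$. Thus $u:=\overline{S}-2cm\geq 0$ vanishes along the interior submanifold $L$. Chaining the soliton identity $\overline{\Delta}_f\overline{S}=2c\overline{S}-2|\overline{\Ric}|^2$ with the Cauchy--Schwarz bound $|\overline{\Ric}|^2\geq\overline{S}^2/n$ yields the drift-elliptic inequality $\overline{\Delta}_f u + \tfrac{2}{n}\overline{S}\,u\leq 0$, and the strong maximum principle (supplemented by real analyticity of Ricci solitons to handle the possibly noncompact ambient) propagates $u\equiv 0$ throughout $X$. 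The equality case of Cauchy--Schwarz then forces $\overline{\Ric}=(\overline{S}/n)\overline{g}=c\overline{g}$, making $(X,J,\overline{\omega})$ K\"ahler--Einstein and giving the desired contradiction.

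The main obstacle is the final propagation step: one needs the scalar-curvature lower bound (nontrivial in the expanding case) together with a strong maximum principle valid on the possibly noncompact $X$ --- or, equivalently, one can use analyticity of Ricci solitons to promote the vanishing of $u$ from the lower-dimensional $L$ to all of $X$. The opening Gauss-formula computation, the Lagrangian/K\"ahler trace identity $\operatorname{tr}_L\overline{\Ric}=\tfrac12\overline{S}|_L$, and the integration on compact $L$ are otherwise routine.
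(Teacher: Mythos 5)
Your proposal is correct, and while it starts from the same two geometric inputs as the paper --- the traced identity $\Delta_g(F^*f)=\operatorname{tr}_L\overline{\Hess}f+\langle H,\overline{\nabla}f\rangle$ combined with $\operatorname{tr}_L\overline{\Ric}=\tfrac12\overline{R}$ on a Lagrangian, and the scalar-curvature estimates for complete gradient solitons --- the endgame is genuinely different. The paper converts the traced equation into a pointwise elliptic identity for $f|_L$ by invoking the Hamilton-type first integrals ($\overline{R}+|\overline{\nabla}f|^2$ constant in the steady case, $\overline{R}+|\overline{\nabla}f|^2+2f$ constant in the expanding case), applies the maximum principle on the compact $L$, and in the expanding case quotes both the bound $\overline{R}\geq -2m$ and the rigidity statement of \cite{SZ}. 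You instead simply integrate over the compact $L$, so the divergence theorem replaces any pointwise maximum principle on $L$, and the lower bounds $\overline{R}\geq 0$ (steady) and $\overline{R}\geq 2cm$ (expanding) alone force $\overline{R}\equiv 2cm$ and $(\overline{\nabla}f)^{\perp}\equiv 0$ along $L$; you then derive the Einstein rigidity yourself from the strong maximum principle applied to $\overline{\Delta}_f\overline{R}=2c\overline{R}-2|\overline{\Ric}|^2$ together with the equality case of $|\overline{\Ric}|^2\geq\overline{R}^2/n$. This buys a single argument treating both cases uniformly and makes the rigidity self-contained rather than cited. Two small remarks: the appeal to real analyticity is unnecessary --- the strong maximum principle is local, so on the connected $X$ it propagates the zero of $u=\overline{R}-2cm\geq 0$ globally regardless of noncompactness (the sign of the zeroth-order coefficient is harmless since the interior minimum value is exactly $0$); and, like the paper, your argument implicitly requires $X$ to be complete for the scalar-curvature lower bounds to apply.
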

\begin{proof}
Let $X,\:Y$ be tangent vectors of $L$. Then by $f$-minimality, 
\begin{align*}
\overline{\Hess}f(X, Y) &= \langle\overline{\nabla}_{X}\overline{\nabla}f, Y\rangle\\
 &= \langle\overline{\nabla}_{X}\nabla f, Y\rangle + \langle\overline{\nabla}_{X}(\overline{\nabla}f)^{\perp}, Y\rangle\\
 &= \Hess f(X, Y) - 2\langle\overline{\nabla}_{X}H, Y\rangle\qquad[\:\mbox{Since $H + \frac{1}{2}(\overline{\nabla}f)^{\perp} = 0$}.]\\
 &= \Hess f(X, Y) + 2A(H, X, Y).
\end{align*}

We first deal with the steady case. Since $\overline{\Ric} + \overline{\Hess}f = 0$,
\begin{align*}
\overline{\Ric}(X, Y) = -\Hess f(X, Y) -2A(H, X, Y).
\end{align*}
Let $\{e_{i}\}_{i=1}^{m}$ be an orthonormal basis of $T_{p}L$ for some $p\in L$. Then
\begin{align*}
\sum_{i=1}^{m}\overline{\Ric}(e_{i}, e_{i}) &= -\Delta f - 2|H|^{2}\\
 &= -\Delta f + \frac{1}{2}|\nabla f|^{2} - \frac{1}{2}|\overline{\nabla}f|^{2}\qquad[\:\mbox{Since $\overline{\nabla}f = \nabla f - 2H$}.]
\end{align*}
Since $F:L\looparrowright X$ is Lagrangian, $\{e_{i}, Je_{i}\}_{i=1}^{m}$ is an orthonormal basis of $T_{p}X$, Hence we have
\begin{align}\label{Ricci}
\overline{R} = \sum_{i=1}^{m}\overline{\Ric}(e_{i}, e_{i}) + \sum_{i=1}^{m}\overline{\Ric}(Je_{i}, Je_{i}) = 2\sum_{i=1}^{m}\overline{\Ric}(e_{i}, e_{i}).
\end{align}
Combining these equations we obtain
\begin{align*}
\overline{R} + |\overline{\nabla}f|^{2} = -2\Delta f + |\nabla f|^{2}.
\end{align*}
Now by Cao-Hamilton {\cite{CH}}, the quantity $\overline{R} + |\overline{\nabla}f|^{2}$ is constant on $X$ (see also {\cite{RB}} for a different proof). Therefore $f$ satisfies
\begin{align*}
\Delta f - \frac{1}{2}|\nabla f|^{2} = c
\end{align*}
for some $c\in\mathbb{R}$ on $L$. The result then follows from maximum principle.

Next we prove the expanding case. We may assume $\overline{\Ric} + \overline{\Hess}f = -\overline{g}$. For any vectors $X, Y$ tangent to $L$ we have
\begin{align*}
\overline{\Ric}(X, Y) + \Hess f(X, Y) + 2A(H, X, Y) = -\overline{g}(X, Y).
\end{align*}
Taking the tangential trace on both sides and use $(\ref{Ricci})$ we get
\begin{align}\label{noncpt}
\Delta f -\frac{1}{2}|\nabla f|^{2} + \frac{1}{2}|\overline{\nabla}f|^{2} + \frac{\overline{R}}{2} = - m.
\end{align}
On any gradient expanding soliton we know that (see {\cite{MC}}), after adding a suitable constant to $f$, $\overline{R} + |\overline{\nabla}f|^{2} + 2f = 0$. Hence $f$ satisfies
\begin{align*}
\Delta f -\frac{1}{2}|\nabla f|^{2} -f =  -m.
\end{align*}
Assume $x_{0}\in L$ is a local minimum of $f$ restricted on $L$, then
\begin{align*}
0\leq\Delta f (x_{0}) = -m + f(x_{0})\;\Longrightarrow\;f\big|_{L}\geq m.
\end{align*}
But from {\cite{SZ}} Corollary 2.4, the scalar curvature is bounded from below $\overline{R}\geq -2m$, so
\begin{align*}
2f = -\overline{R}-|\overline{\nabla}f|^{2}\leq-\overline{R}\leq 2m\;\Longrightarrow\:f\leq m\;\mbox{on $X$}.
\end{align*}
Therefore if $f\big|_{L}$ attains minimum on $L$, then $f\big|_{L} \equiv\ m$ is constant on $L$. Hence $\overline{R}\big|_{L} = -2m$, which means the minimum of $\overline{R}$ is attained on $L$. By {\cite{SZ}} Corollary 2.4, $\overline{g}$ is Einstein, a contradiction.

\end{proof}

\subsection{Infinitesimal Deformations of Lagrangian Translating Solitons}
We consider the special case that $F:L\looparrowright\mathbb{C}^{m}$ is a Lagrangian translating soliton. 
Let $(x_{1},\cdots,x_{m}, y_{1}, \cdots,y_{m})$ be standard coordinates in $\mathbb{R}^{2m}\simeq\mathbb{C}^{m}$. The Liouville form is defined by $\lambda := \sum_{i=1}^{m}y_{i}dx_{i}-x_{i}dy_{i}$. A Lagrangian submanifold $F: L\looparrowright\mathbb{C}^{m}$ is said to be {\it exact} if
\begin{align*}
F^{*}\lambda = d\beta
\end{align*}
for some $\beta\in C^{\infty}(L)$. The exact deformations (deformations that preserves exactness) are induced by exact $1$-forms on $L$, that is, $\xi\in\Gamma_{c}(NL)$ is an exact deformation if and only if $\widetilde{\omega}(\xi)$ is exact (see {\cite{NL}, Lemma 5.4).

We will restrict our attention to the study of exact deformations of an exact Lagrangian translating soliton $F:L\looparrowright\mathbb{C}^{m}$. In this case, $(\ref{fhar})$ reduces to the $f$-Laplace equation 
\begin{align}\label{Lap}
\Delta_{f}u = 0
\end{align}
on $L$, where $\Delta_{f} = -d^{*}_{f}d = \Delta - \frac{1}{2}\langle\nabla f, \:\cdot\:\rangle$ is the Witten Laplacian acting on functions. The solutions $u$ to $(\ref{Lap})$ are called $f$-harmonic functions on $L$. We show that there is a weighted $L^{2}$ gap between $f$-harmonic functions.

\begin{proposition}\label{L2}
Suppose $F:L\looparrowright\mathbb{C}^{m}$ is a Lagrangian translating soliton with $H+T^{\perp} = 0$, $|T| = 1$. Let $f(z) = 2\langle z, T\rangle$ and $u$ be a function on $L$ satisfying $\int_{L}u^{2} e^{-\frac{f}{2}}dV_{g} < \infty$ and $\Delta_{f}u = 0$. Then $u \equiv 0$.
\end{proposition}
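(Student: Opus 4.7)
The plan is to conjugate the weighted Laplacian by $e^{-f/4}$: set $v := e^{-f/4}u$, so that $v\in L^{2}(L, dV_{g})$ if and only if $u \in L^{2}(e^{-f/2}dV_{g})$. A direct computation shows that $\Delta_{f}u=0$ becomes a Schr\"odinger-type equation
\begin{align*}
\Delta v \;=\; V\,v,\qquad V \;:=\; \tfrac{1}{16}|\nabla f|^{2} - \tfrac{1}{4}\Delta f,
\end{align*}
where $\Delta$ and $\nabla$ now refer to the intrinsic operators on $(L,g)$. The key point is that under the translator hypothesis $V$ is bounded below by a positive constant on $L$, so a cutoff argument in the \emph{unweighted} space $L^{2}(L,dV_{g})$ will force $v\equiv 0$, hence $u\equiv 0$.

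To compute $V$, note that $f(z)=2\langle z,T\rangle$ is linear on $\mathbb{C}^{m}$, so $\overline{\Hess}f\equiv 0$ and $\overline{\nabla}f \equiv 2T$. The formula $\Delta f = \operatorname{tr}_{TL}(\overline{\Hess}f) + \langle \overline{\nabla}f, H\rangle$ for the Laplacian of an ambient function restricted to $L$ then yields
\begin{align*}
\Delta f = 2\langle T, H\rangle = -2|H|^{2},\qquad |\nabla f|^{2} = 4|T^{T}|^{2} = 4(1-|H|^{2}),
\end{align*}
using $H = -T^{\perp}$ and $|T|=1$. Substituting gives $V = \tfrac{1}{4}(1+|H|^{2})\geq \tfrac{1}{4}$ on $L$.

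For the cutoff, pick $\phi_{R}\in C^{\infty}_{c}(L)$ with $\phi_{R}\equiv 1$ on the intrinsic ball $B_{R}(p)\subset L$, support in $B_{2R}(p)$, and $|\nabla\phi_{R}|\leq C/R$. Multiplying $\Delta v = V v$ by $\phi_{R}^{2}v$, integrating over $L$, integrating by parts, and applying $2|ab|\leq a^{2}+b^{2}$ to absorb the cross term arising from $\nabla\phi_{R}$ yields
\begin{align*}
\int_{L}\phi_{R}^{2} V v^{2}\,dV_{g} \;\leq\; \int_{L} v^{2}|\nabla\phi_{R}|^{2}\,dV_{g} \;\leq\; \tfrac{C^{2}}{R^{2}}\|v\|_{L^{2}}^{2}.
\end{align*}
As $R\to\infty$ the right-hand side vanishes, and Fatou's lemma gives $\int_{L} V v^{2}\,dV_{g}\leq 0$; since $V\geq 1/4$, this forces $v\equiv 0$.

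The main technical hurdle is the construction of cutoffs $\phi_{R}$ with $|\nabla\phi_{R}|=O(1/R)$: this requires $L$ to be complete for the induced metric (so intrinsic distance balls work) or the immersion $F$ to be proper (so that $\phi_{R}(x)=\eta(|F(x)|/R)$ works, noting the pulled-back function $|F|$ on $L$ is Lipschitz with constant $1$); one of these regularity assumptions is presumably implicit in the setup. Working directly in the weighted $L^{2}$ setting instead yields only that $u$ is constant, and then requires the separate input $V_{f}(L)=\infty$; the Schr\"odinger conjugation above elegantly bypasses this potentially delicate $f$-volume estimate.
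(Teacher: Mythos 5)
Your proposal is correct, and it takes a genuinely different route from the paper. The paper argues in two stages entirely within the weighted space: first it sets $w=u^{2}$, computes $\Delta_{f}w=2|\nabla u|^{2}$, and runs the cutoff argument against the weighted measure $e^{-f/2}dV_{g}$ to conclude $u$ is constant; then, to rule out nonzero constants, it uses the same two translator identities you derive ($\Delta f=-2|H|^{2}$ and $|H|^{2}+\tfrac{1}{4}|\nabla f|^{2}=1$) to get $\Delta_{f}e^{f/4}\leq-\tfrac{1}{4}e^{f/4}$, hence $\lambda_{1}(\Delta_{f})\geq\tfrac14$, and then cites Vieira to conclude that $L$ has infinite weighted volume. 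Your ground-state conjugation $v=e^{-f/4}u$ packages the spectral bound as the pointwise inequality $V=\tfrac14(1+|H|^{2})\geq\tfrac14$ for the Schr\"odinger potential, and the single unweighted Caccioppoli estimate $\int\phi_{R}^{2}Vv^{2}\leq\int v^{2}|\nabla\phi_{R}|^{2}$ then kills $v$ outright --- no separate infinite-volume input and no external citation. Your computations check out: the conjugation identity $\Delta_{f}(e^{f/4}v)=e^{f/4}\bigl(\Delta v-\tfrac{1}{16}|\nabla f|^{2}v+\tfrac14 v\,\Delta f\bigr)$ gives exactly your $V$, and the trace formula for $\Delta f$ on $L$ with $\overline{\Hess}f=0$ gives $\Delta f=2\langle T,H\rangle=-2|H|^{2}$, matching the identities the paper states. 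Your remark about needing completeness of $(L,g)$ (or properness of $F$) for the cutoffs is fair; the paper's proof has exactly the same implicit requirement, so this is not a gap relative to the original. In short: same key translator identities, but a cleaner, more self-contained endgame.
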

\begin{proof}
Suppose $\Delta_{f}u = 0$. Let $w:= u^{2}$, then 
\begin{align*}
\Delta_{f}w = 2u\Delta u + 2|\nabla u|^{2} - u\langle\nabla u, \nabla f\rangle = 2|\nabla u|^{2}.
\end{align*}
Fix $p\in L$, consider a sequence of cut-off functions $\{\phi_{R}\}_{R>0}$ satisfying
\begin{align*}
\phi_{R} = 1\mbox{ in $B_{R}(p)$}, \;\;\phi_{R} = 0\mbox{ outside $B_{2R}(p)$},\;\mbox{ and }\;|\nabla\phi_{R}|\leq\frac{C}{R}
\end{align*}
for some $C>0$. Then
\begin{align*}
\begin{array}{rcl}
0 &=& \displaystyle\int_{L} div_{L}(e^{-\frac{f}{2}}\phi^{2}_{R}\nabla u^{2})\:dV_{g}\vspace{0.25cm}\\
    &=&\displaystyle\int_{L}\langle\nabla(e^{-\frac{f}{2}}\phi^{2}_{R}), \nabla u^{2}\rangle \:dV_{g} + \int_{L}e^{-\frac{f}{2}}\phi^{2}_{R}\Delta u^{2} \:dV_{g}\vspace{0.25cm}\\
    &=&\displaystyle\int_{L}4\langle u\nabla\phi_{R}, \phi_{R}\nabla u\rangle \:e^{-\frac{f}{2}}dV_{g} + \int_{L}2|\nabla u|^{2} \phi^{2}_{R}\:e^{-\frac{f}{2}}dV_{g}.
\end{array}
\end{align*}
By Young's inequality with $\epsilon = 1/2$ we then have
\begin{align*}
\int_{L}2|\nabla u|^{2} \phi^{2}_{R}\:e^{-\frac{f}{2}}dV_{g} &= -\int_{L}4\langle u\nabla\phi_{R}, \phi_{R}\nabla u\rangle \:e^{-\frac{f}{2}}dV_{g}\\ &\leq\int_{L}4u^{2}|\nabla\phi_{R}|^{2} \:e^{-\frac{f}{2}}dV_{g} + \int_{L}\phi^{2}_{R}|\nabla u|^{2}\:e^{-\frac{f}{2}}dV_{g}.
\end{align*}
Thus letting $R\to\infty$, by finiteness of $\int_{L}u^{2}\:e^{-\frac{f}{2}}dV_{g}$ we obtain $\int_{L}|\nabla u|^{2}e^{-\frac{f}{2}}dV_{g} = 0$, hence $u$ is constant. 

To show $u=0$, it is enough to show that $L$ has infinite weighted volume. First notice that we have the identities
\begin{align*}
\Delta f = -2|H|^{2} \mbox{ and } |H|^{2} + \frac{1}{4}|\nabla f|^{2} = 1,
\end{align*}
so
\begin{align*}
\Delta_{f}\:e^{\frac{f}{4}} = \frac{1}{4}\:e^{\frac{f}{4}}\left(\frac{1}{4}|\nabla f|^{2}-2\right)\leq -\frac{1}{4}\:e^{\frac{f}{4}}.
\end{align*}
From this we deduce that $\lambda_{1}(\Delta_{f})\geq\frac{1}{4}$ (see, for example, Proposition 22.2 of {\cite{Li}}). Then by a simple argument in {\cite{Vieira} Corollary 4.2}, we conclude that $L$ has infinite weighted volume. 
\end{proof}

From {\cite{NT}} proposition 2.2, we have $\Delta_{f}\theta = 0$ on Lagrangian translating solitons. Thus $\theta$ is $f$-harmonic. This corresponds to the fact that the exact deformation vector field induced by $\theta$ is just the mean curvature $H = J\nabla\theta$, which is just a translation in $\mathbb{C}^{m}$.

\begin{corollary}\label{CR}
If $F:L\looparrowright\mathbb{C}^{m}$ is a Lagrangian translating soliton as in Proposition $\ref{L2}$, $u\in C^{\infty}(L)$ with $\Delta_{f}u = 0$ and $\int_{L}(u-\theta)^{2}\:e^{-\frac{f}{2}}dV_{g}<\infty$, then $u\equiv\theta$.
\end{corollary}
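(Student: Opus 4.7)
The plan is to reduce the statement to Proposition \ref{L2} by subtracting off $\theta$ and showing that the difference satisfies the hypotheses of that proposition.

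First I would check that $\theta$ is a genuine $\mathbb{R}$-valued smooth function on $L$ (not merely defined modulo $2\pi\mathbb{Z}$). This is immediate from the translator equation (\ref{TLS}): since $\theta(F) = \langle F, JT\rangle + \theta_{0}$, the right-hand side is an honest smooth function of $F$, giving a canonical real lift. In particular $u - \theta$ is a well-defined element of $C^{\infty}(L)$, so the assumption $\int_{L}(u-\theta)^{2} e^{-\frac{f}{2}} dV_{g} < \infty$ makes sense.

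Next I would invoke the fact cited just before the corollary, namely that $\Delta_{f}\theta = 0$ on any Lagrangian translating soliton (Neves--Tian \cite{NT}, Proposition 2.2). Setting $v := u - \theta$, linearity of the Witten Laplacian gives
\begin{align*}
\Delta_{f} v = \Delta_{f} u - \Delta_{f}\theta = 0,
\end{align*}
while the hypothesis on $u$ ensures that $v \in L^{2}(L, e^{-\frac{f}{2}} dV_{g})$. Now Proposition \ref{L2} applies verbatim to $v$, yielding $v \equiv 0$, i.e. $u \equiv \theta$.

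I do not anticipate a genuine obstacle: the argument is essentially a one-line linearity trick, and the only thing to verify carefully is the well-definedness of $\theta$ as a real function, which is handed to us by the translator equation. Note that the condition $|T|=1$ in Proposition \ref{L2} is harmless here since the corollary assumes the same setup; if one wanted $|T|\ne 1$ the same proof works by rescaling the soliton.
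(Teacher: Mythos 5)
Your argument is correct and is exactly the route the paper intends: the remark preceding the corollary records $\Delta_{f}\theta=0$ precisely so that one can apply Proposition \ref{L2} to $v=u-\theta$, which is $f$-harmonic and in the weighted $L^{2}$ space by hypothesis, giving $v\equiv 0$. Your extra care about $\theta$ being a genuine real-valued function via the translator equation (\ref{TLS}) is a sensible check and does not change the argument.
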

\noindent Therefore there is no nontrivial exact deformation of exact Lagrangian translating solitons whose potential $u$ has finite weighted $L^{2}$ distance to the Lagrangian angle $\theta$. This provides a kind of infinitesimal uniqueness of exact Lagrangian translating solitons.

From $f$-harmonicity of $\theta$, we also have a nonexistence theorem in 2-dimensions.
\begin{corollary}
If $F:L\looparrowright\mathbb{C}^{m}$ is a Lagrangian translating surface as in Proposition $\ref{L2}$ with Lagrangian angle $\theta$ satisfying $\int_{L}\theta^{2}\:e^{-\frac{f}{2}}dV_{g}<\infty$, then $L$ is a plane. 
\end{corollary}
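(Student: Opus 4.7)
The proof will combine two ingredients: the fact that the Lagrangian angle $\theta$ is $f$-harmonic, and a geometric rigidity argument that kicks in once we know $\theta$ is constant.

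First I would invoke the identity $\Delta_f \theta = 0$ on any Lagrangian translating soliton, which is already noted in the paper just after Proposition \ref{L2} as coming from \cite{NT} (Proposition 2.2). Combined with the hypothesis $\int_{L}\theta^{2}\,e^{-f/2}dV_{g}<\infty$, Proposition \ref{L2} applied directly to $u=\theta$ forces $\theta\equiv 0$. Since the mean curvature of a Lagrangian in $\mathbb{C}^{m}$ is $H=J\nabla\theta$, this gives $H\equiv 0$, so $L$ is a minimal Lagrangian surface. Substituting back into the translator equation $H+T^{\perp}=0$ gives $T^{\perp}=0$, i.e., the constant vector $T$ lies in $T_{p}L$ at every point $p\in L$.

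The remaining task is to show that a complete, minimal Lagrangian surface $L\subset\mathbb{C}^{2}$ which has the constant unit vector $T$ tangent everywhere must be an affine plane. Since the vector field $T$ on $\mathbb{C}^{2}$ is complete and tangent to $L$, its integral curves, which are straight lines of the form $p+sT$, lie entirely in $L$. Hence $L$ is a cylinder $\gamma\times\mathbb{R}T$, where $\gamma$ is the profile curve obtained as the orthogonal projection of $L$ onto the affine plane perpendicular to $T$ through some base point. The Lagrangian condition forces $JT\perp T_{p}L$ at every $p$, so the unit tangent $\gamma'(s)$ to the profile curve lies in the $2$-plane $V:=\{T,JT\}^{\perp}$.

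Finally, I would parametrize $F(s,t)=\gamma(s)+tT$ with $|\gamma'|=1$ and compute the second fundamental form: the only nonzero component is $II(\partial_{s},\partial_{s})=\gamma''(s)$. So $H=\tfrac{1}{2}\gamma''$, and minimality yields $\gamma''\equiv 0$, i.e., $\gamma$ is a straight line in $V$, and $L$ is a plane. I expect the main technical nuance to be justifying the cylindrical structure globally (which requires some completeness assumption implicit in the soliton setting), but once this is in hand the curvature computation is immediate and dimension $2$ plays no deeper role beyond allowing the two-variable parametrization above.
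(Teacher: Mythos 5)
Your proposal is correct and follows essentially the same route as the paper: apply Proposition \ref{L2} to the $f$-harmonic function $\theta$ to get $\theta\equiv 0$, hence $H=0$ and $T^{\perp}=0$, then use the resulting splitting $L\simeq\Sigma\times\mathbb{R}T$ with $\Sigma$ a minimal curve in the plane $\{T,JT\}^{\perp}$ to conclude $\Sigma$ is a line. Your version simply spells out the cylinder parametrization and the second fundamental form computation that the paper leaves implicit.
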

\begin{proof}
By proposition $\ref{L2}$, $\theta\equiv 0$ on $L$, so $T^{\perp} = -H = 0$, that is, $T$ is tangent to $L$. Therefore $L\simeq\Sigma\times\mathbb{R}\subset\mathbb{C}\times\mathbb{C}$ for some minimal curve $\Sigma$. Hence $\Sigma$ is a line and $L$ is a plane.
\end{proof}

\section{Generalizations and Related Problems}
\subsection{Generalization to Almost-Einstein Case}
Suppose now $(X, J, \:\overline{\omega})$ is a K\"ahler manifold and $f$ is a smooth function which is not necessarily a holomorphy potential. Then by the same computations as the proof of Theorem $\ref{SVF}$, one can show that the second variation formula of $f$-volume becomes
\begin{align}
(\delta^{2}V_{f})_{F(L)}(\xi) = \int_{L}\{\:\langle\:\widetilde{\omega}^{-1}\circ\Delta_{f}\circ\widetilde{\omega}(\xi),\: \xi\:\rangle - [\:\overline{\rho}(\xi, J\xi) + i\:\partial\overline{\partial}f(\xi,J\xi)\:]\:\}\;e^{-\frac{f}{2}}dV_{g},
\end{align}
where $\overline{\rho} = \overline{\Ric}(J\cdot, \cdot)$ is the Ricci form of $\overline{\omega}$.
In this case, the $f$-stability depends on the bilinear form $\overline{\rho}(\cdot, J\cdot) + i\:\partial\overline{\partial}f(\cdot,J\cdot)$. In particular, if $(X, J,\:\overline{\omega}, f)$ is \textit{almost-Einstein}, that is, 
\begin{align}
\overline{\rho} + i\:\partial\overline{\partial}f = C\:\overline{\omega}
\end{align}
for some $C\in\mathbb{R}$, then $\overline{\rho}(\cdot, J\cdot) + i\:\partial\overline{\partial}f(\cdot,J\cdot) = C\:\overline{g}(\cdot, \cdot)$. Thus
\begin{corollary}
Suppose $(X, J,\:\overline{\omega}, f)$ is almost-Einstein with $\overline{\rho} + i\:\partial\overline{\partial}f = C\:\overline{\omega}$ for some $C\in\mathbb{R}$. Then
\begin{enumerate}
\item[(i)] every $f$-minimal Lagrangian submanifold is $f$-stable if $C\leq 0$, and
\item[(ii)] if $C>0$, any compact $f$-minimal Lagrangian submanifold is Hamiltonian $f$-stable if and only if $\lambda_{1}(\Delta_{f})\geq C$.
\end{enumerate}
\end{corollary}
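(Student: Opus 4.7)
The plan is to substitute the almost-Einstein hypothesis into the second variation formula stated at the beginning of this subsection and reduce each claim to a spectral inequality for the Witten Laplacian $\Delta_f$. First, under the assumption $\overline{\rho}+i\,\partial\overline{\partial}f=C\,\overline{\omega}$, we have, for any normal vector field $\xi$ along $L$,
\[
\overline{\rho}(\xi,J\xi)+i\,\partial\overline{\partial}f(\xi,J\xi)=C\,\overline{\omega}(\xi,J\xi)=C\,\overline{g}(\xi,\xi)=C|\xi|^{2}.
\]
Set $\alpha:=\widetilde{\omega}(\xi)$. Since $\widetilde{\omega}:NL\to T^{*}L$ is a fibrewise isometry, $|\alpha|^{2}_{g}=|\xi|^{2}$, and by the definition of $\Delta_{f}=dd^{*}_{f}+d^{*}_{f}d$ we have
\[
\int_{L}\langle\widetilde{\omega}^{-1}\circ\Delta_{f}\circ\widetilde{\omega}(\xi),\xi\rangle\,e^{-f/2}dV_{g}
=\int_{L}\langle\Delta_{f}\alpha,\alpha\rangle\,e^{-f/2}dV_{g}
=\int_{L}\bigl(|d\alpha|^{2}+|d^{*}_{f}\alpha|^{2}\bigr)\,e^{-f/2}dV_{g},
\]
where the last equality is the formal self-adjointness of $d$ with respect to $(\,\cdot\,,\,\cdot\,)_{f}$. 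Substituting into the stated formula yields the clean expression
\[
(\delta^{2}V_{f})_{F(L)}(\xi)=\int_{L}\bigl(|d\alpha|^{2}+|d^{*}_{f}\alpha|^{2}-C|\alpha|^{2}\bigr)\,e^{-f/2}dV_{g}.
\]

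For part (i), when $C\leq 0$ the integrand is a sum of three non-negative terms for every $\xi\in\Gamma_{c}(NL)$, so $(\delta^{2}V_{f})_{F(L)}(\xi)\geq 0$ and $L$ is $f$-stable.

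For part (ii), assume $L$ is compact and restrict to a Hamiltonian variational field, so that $\alpha=du$ for some $u\in C^{\infty}(L)$. Then $d\alpha=0$ and $d^{*}_{f}\alpha=\Delta_{f}u$, where $\Delta_{f}$ now denotes the Witten Laplacian on functions; integrating by parts against $e^{-f/2}dV_{g}$ gives
\[
\int_{L}|du|^{2}\,e^{-f/2}dV_{g}=\int_{L}u\,\Delta_{f}u\,e^{-f/2}dV_{g},
\]
so that
\[
(\delta^{2}V_{f})_{F(L)}(\xi)=\int_{L}\bigl((\Delta_{f}u)^{2}-C\,u\,\Delta_{f}u\bigr)\,e^{-f/2}dV_{g}.
\]
Expanding $u=\sum_{i\geq 0}a_{i}\phi_{i}$ in an $L^{2}(e^{-f/2}dV_{g})$-orthonormal basis of eigenfunctions $\Delta_{f}\phi_{i}=\lambda_{i}\phi_{i}$ with $0=\lambda_{0}<\lambda_{1}\leq\lambda_{2}\leq\cdots$ (the zero eigenvalue is simple on connected compact $L$, with constant eigenfunction), the constant mode contributes $0$ (since $\alpha=du$ annihilates it) and
\[
(\delta^{2}V_{f})_{F(L)}(\xi)=\sum_{i\geq 1}a_{i}^{2}\,\lambda_{i}(\lambda_{i}-C).
\]
This sum is non-negative for every non-constant $u$ if and only if $\lambda_{i}\geq C$ for all $i\geq 1$, which is equivalent to $\lambda_{1}(\Delta_{f})\geq C$. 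The forward direction (Hamiltonian $f$-stability $\Rightarrow\lambda_{1}\geq C$) is proved by contradiction: if $\lambda_{1}<C$, testing with $u=\phi_{1}$ gives $(\delta^{2}V_{f})_{F(L)}=\lambda_{1}(\lambda_{1}-C)<0$.

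The only potentially delicate point is the bookkeeping in the first paragraph, namely that $\widetilde{\omega}$ is indeed a fibrewise isometry (so that the quadratic form pulls back correctly) and that the term $\overline{\rho}(\xi,J\xi)+i\,\partial\overline{\partial}f(\xi,J\xi)$ equals $C|\xi|^{2}$ under the almost-Einstein condition; once these are verified the corollary is a direct spectral consequence. No new analytic input beyond the stated formula and standard elliptic theory on compact $L$ is required, and the case $f\equiv\mathrm{const}$ recovers the classical Chen--Oh result, as expected.
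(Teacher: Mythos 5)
Your proposal is correct and follows essentially the same route the paper intends: substitute the almost-Einstein identity $\overline{\rho}(\xi,J\xi)+i\,\partial\overline{\partial}f(\xi,J\xi)=C|\xi|^{2}$ into the generalized second variation formula, use that $\widetilde{\omega}$ is a fibrewise isometry to rewrite the quadratic form as $\int_{L}(|d\alpha|^{2}+|d^{*}_{f}\alpha|^{2}-C|\alpha|^{2})e^{-f/2}dV_{g}$, and then run Oh's spectral argument for the Witten Laplacian on exact $1$-forms (the paper itself defers to Oh's Theorems 3.6 and 4.4 and to Kajigaya--Kunikawa rather than writing this out). The details, including the compactness hypothesis for the eigenfunction expansion in part (ii), are handled correctly.
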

\noindent Notice that the above Hamiltonian $f$-stability criterion is also obtained in {\cite{KK}}.

\subsection{Generalized Lagrangian Mean Curvature Flow and Dynamic Stability}
A longstanding problem in Geometry is the existence problem for SLags in Calabi--Yau manifolds. Since SLags are volume minimizing, one approach to tackle the existence problem is to deform an initial Lagrangian submanifold along the negative gradient flow of the volume functional, namely, the mean curvature flow (MCF)
\begin{align*}
\left(\frac{d}{dt}F_{t}\right)^{\perp} = H(F_{t}).
\end{align*}
Smoczyk {\cite{Smo}} proves that the Lagrangian condition is preserved by MCF if the ambient space is K\"ahler-Einstein, and in this case the flow is called Lagrangian mean curvature flow (LMCF). However, finite-time singularities often occur and therefore in general one cannot have long-time existence and convergence. There are conjectural pictures in dealing with this problem, see for example, Thomas-Yau {\cite{TY}} and Joyce {\cite{JConj}}. 

A relevant question about long-time existence and convergence of LMCF one can ask is the relation between  stability of minimal Lagrangians under volume functional and \textit{dynamic stability} of LMCF, that is, whether a small Lagrangian perturbation of a stable minimal Lagrangian submanifold converges back to the original minimal submanifold along LMCF? Results in this direction can be found in, for instance, Li {\cite{HZLi}}, Tsai-Wang {\cite{TW1}}, {\cite{TW2}}, see also Lotay-Schulze {\cite{LS}} for an applications of {\cite{TW2}} to LMCF with singularities. 

The above picture can be generalized to $f$-minimal Lagrangians. More precisely, we consider the negative gradient flow of the $f$-volume functional:
\begin{align}\label{5}
\frac{d}{dt}F_{t} = H(F_{t}) + \frac{1}{2}(\overline{\nabla}f)^{\perp_{\overline{g},\:F_{t}}}.
\end{align}
Behrndt {\cite{TB}} shows that if $(X, J, \:\overline{\omega}, f)$ is almost-Einstein, then the Lagrangian condition is preserved by the flow $(\ref{5})$, called the \textit{generalized Lagrangian mean curvature flow (GLMCF)}. The stationary points of $(\ref{5})$ are the $f$-minimal Lagrangians. Therefore we can ask the same question about dynamic stability of GLMCF. Kajigaya--Kunikawa {\cite{KK}} recently generalized Li's result {\cite{HZLi}} and obtained a dynamic stability theorem for compact $f$-minimal Lagrangians in compact almost-Einstein K\"ahler manifolds. Besides the compact cases, the dynamic stability for LMCF solitons under GLMCF are especially interesting since in this case the GLMCF corresponds to LMCF with scalings.
\begin{problem}
Are the expanding and translating solitons for LMCF dynamically stable under GLMCF?
\end{problem}
\noindent The author believe that this problem is related to the conjectural theory of formation and desingularization of singularities of LMCF proposed by Joyce {\cite{JConj}}.

\subsection{K\"ahler--Ricci Mean Curvature Flow}\label{KRMCF}
There is another generalization of LMCF by considering the mean curvature flow along a moving ambient metric. Let $\{\overline{g}(t)\}$ be a solution to KRF, that is, its K\"ahler form $\overline{\omega}(t)$ satisfies
\begin{align}\label{KRF}
\frac{d}{dt}\overline{\omega}(t) = -\rho(\overline{\omega}(t)),\quad\mbox{ for $t\in(a, b)$},
\end{align}
where $\rho$ denotes the Ricci form. We consider the mean curvature flow $\{F_{t}\}$ along $\{\overline{g}(t)\}$, that is,
\begin{align}\label{MCF'}
\left(\frac{d}{dt}F_{t}\right)^{\perp_{\overline{g}(t), F_{t}}} = H(F_{t})\quad\mbox{ for $t\in(a, b)$},
\end{align}
where the mean curvature $H(F_{t})$ of $F_{t}$ is computed with respect to $\overline{g}(t)$. The couple $(\overline{g}(t), F_{t})$ defined by $(\ref{KRF})$ and $(\ref{MCF'})$ is called the K\"ahler--Ricci mean curvature flow (KR-MCF for short). Smoczyk {\cite{Smo}} shows that Lagrangian condition is preserved by KR--MCF.

Now, if we are given a gradient KR soliton $(X, J,\:\overline{\omega}, f)$, then there is a canonical KRF solution 
\begin{align}
\overline{g}(t) := \sigma(t)\:\varphi^{*}_{t}\overline{g},
\end{align}
which is defined for all $t$ such that $\sigma(t)>0$ (see Example $\ref{ExKRS}$), where $\varphi_{t}$ is the biholomorphism on $X$ generated by $\frac{1}{2\sigma(t)}\overline{\nabla}f$. In this case there is an one-to-one correspondence between GLMCF and KR-MCF, as shown in the following lemma.

\begin{lemma}\label{CF}
Let $(X, J, \:\overline{\omega}, f)$ be a gradient KR soliton and let $\overline{g}(t)$ be the solution to KRF defined as above. If $(\overline{g}(t), C_{t})$ is the solution to KR-MCF for $t\in(a, b)$, we set $F_{s(t)}:=\varphi_{t}\circ C_{t}$ for $s(t) = \int_{a}^{t}\frac{d\tau}{\sigma(\tau)}$. Then $F_{s}:L\looparrowright X$ satisfies the generalized LMCF in the fixed background $(X, J, \:\overline{\omega})$. Conversely, given a generalized LMCF $\{F_{s}\}$ in $(X, J,\:\overline{\omega})$, then $(\overline{g}(t), C_{t}:=(\varphi_{t})^{-1}\circ F_{s(t)})$ satisfies the KR-MCF.
\end{lemma}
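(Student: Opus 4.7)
The plan is a direct chain-rule computation exploiting the soliton ansatz $\overline{g}(t) = \sigma(t)\varphi_t^*\overline{g}$. I would begin by recording three preliminary facts. From $s(t) = \int_a^t d\tau/\sigma(\tau)$ one has $ds/dt = 1/\sigma(t)$. Since the time-dependent vector field generating $\varphi_t$ is $\frac{1}{2\sigma(t)}\overline{\nabla}f$, we get $\partial_t\varphi_t(p) = \frac{1}{2\sigma(t)}(\overline{\nabla}f)(\varphi_t(p))$. Finally, $\varphi_t^{*}\overline{g} = \sigma(t)^{-1}\overline{g}(t)$ is a \emph{constant conformal rescaling} of $\overline{g}(t)$, so Levi-Civita connections and normal/tangent splittings agree for the two metrics on $X$, and the mean curvature vector of any submanifold rescales by a factor of $\sigma(t)$ going from $\overline{g}(t)$ to $\varphi_t^{*}\overline{g}$. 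Combined with the fact that $\varphi_t: (X, \varphi_t^{*}\overline{g}) \to (X, \overline{g})$ is an isometry pushing $C_t(L)$ onto $F_s(L)$, this yields the key identity
\begin{equation*}
H_{\overline{g}}(F_s) \;=\; \sigma(t)\,(d\varphi_t)\,H_{\overline{g}(t)}(C_t),
\end{equation*}
and moreover $d\varphi_t$ sends $N_{\overline{g}(t)}C_t$ bijectively onto $N_{\overline{g}}F_s$.

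The main calculation is then the chain rule applied to $F_{s(t)} = \varphi_t\circ C_t$:
\begin{equation*}
\frac{dF_s}{ds} \;=\; \sigma(t)\,\frac{d}{dt}(\varphi_t\circ C_t) \;=\; \sigma(t)\bigl[\tfrac{1}{2\sigma(t)}\overline{\nabla}f(F_s) + (d\varphi_t)(\dot C_t)\bigr] \;=\; \tfrac{1}{2}\overline{\nabla}f(F_s) + \sigma(t)(d\varphi_t)(\dot C_t).
\end{equation*}
Taking the $\overline{g}$-normal projection at $F_s(L)$, the tangential part of $(d\varphi_t)(\dot C_t)$ drops out, while its normal part equals $(d\varphi_t)H_{\overline{g}(t)}(C_t)$ by the KR-MCF equation $(\dot C_t)^{\perp_{\overline{g}(t)}} = H_{\overline{g}(t)}(C_t)$. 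The mean-curvature identity above then gives
\begin{equation*}
\Bigl(\frac{dF_s}{ds}\Bigr)^{\perp_{\overline{g}}} \;=\; H_{\overline{g}}(F_s) + \tfrac{1}{2}(\overline{\nabla}f)^{\perp_{\overline{g}}},
\end{equation*}
which is the generalized LMCF equation; any remaining tangential velocity is absorbed by reparametrizing $L$, as is standard for geometric flows of submanifolds. The converse direction is obtained by running the same computation backwards: $t\mapsto s(t)$ is a diffeomorphism of time intervals since $\sigma > 0$, and differentiating $C_t = \varphi_t^{-1}\circ F_{s(t)}$ with $\partial_t(\varphi_t^{-1})(q) = -\frac{1}{2\sigma(t)}(\overline{\nabla}f)(q)$ reverses each of the above steps.

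I expect no deep obstacle; the only genuinely non-cosmetic step is the mean-curvature scaling law under constant conformal change, which in turn rests on the elementary observation that an orthonormal frame for $\overline{g}(t)$ becomes an orthonormal frame for $\varphi_t^{*}\overline{g}$ after rescaling by $\sqrt{\sigma(t)}$, so that $H$ picks up a factor $\sigma(t)^{-1}$ relative to $\overline{g}(t)$, which is exactly canceled by the explicit $\sigma(t)$ arising from the chain rule. All remaining difficulty is bookkeeping: tracking the $\sigma(t)$ factors, keeping straight the distinction between the ambient metrics $\overline{g}$ and $\overline{g}(t)$ when projecting to normal bundles, and verifying that tangential reparametrizations can be chosen compatibly on both sides.
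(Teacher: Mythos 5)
Your proposal is correct and follows essentially the same route as the paper's proof: the chain rule applied to $F_{s(t)}=\varphi_t\circ C_t$, the choice $dt/ds=\sigma(t)$, and the identity $H_{\overline g}(F_s)=\sigma(t)\,d\varphi_t\bigl(H_{\overline g(t)}(C_t)\bigr)$ coming from the constant conformal rescaling $\varphi_t^{*}\overline g=\sigma(t)^{-1}\overline g(t)$; you simply make explicit the normal-projection bookkeeping and the mean-curvature scaling law that the paper leaves implicit. The only blemish is cosmetic: in your closing remarks the factor picked up by $H$ in passing from $\overline g(t)$ to $\varphi_t^{*}\overline g$ should be $\sigma(t)$, not $\sigma(t)^{-1}$, consistent with your own displayed key identity.
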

\begin{proof}
Let $s(t)$ to be determined. We compute
\begin{align*}
\frac{d}{ds}F_{s} = \frac{dt}{ds}\:\frac{\partial}{\partial t}(\varphi_{t}\circ C_{t}) = \frac{dt}{ds}\left[\frac{1}{2\sigma(t)}(\overline{\nabla}f\circ\varphi_{t}\circ C_{t}) + d\varphi_{t}(H(C_{t}))\right].
\end{align*}
By solving $\frac{dt}{ds} = \sigma(t)$, we obtain $s(t) = \int_{a}^{t}\frac{d\tau}{\sigma(\tau)}$. Then by taking the normal component with respect to $\overline{g}$ and $F_{s}$,
\begin{align*}
\left(\frac{d}{ds}F_{s}\right)^{\perp_{\overline{g},\:F_{s}}} &= \frac{1}{2}(\overline{\nabla}f)^{\perp_{\overline{g},\:F_{s}}} + \sigma(t)\:d\varphi_{t}(H(C_{t}))\\
&= \frac{1}{2}(\overline{\nabla}f)^{\perp_{\overline{g},\:F_{s}}} + H(F_{s}).
\end{align*}
The converse follows from similar calculations.
\end{proof}
\noindent Notice that the case for shrinking solitons in shrinking Ricci solitons has been proved by Yamamoto {\cite{Yamamoto1}}, {\cite{Yamamoto2}}.

If we put $F_{s} = F:L\looparrowright X$ to be an $f$-minimal Lagrangian, then the KR-MCF evolves $F$ by $C_{t} = (\varphi_{t})^{-1}\circ F$, defined for all $t$ such that $\sigma(t) = 1-ct>0$. Therefore we conclude that

\begin{proposition}
An $f$-minimal Lagrangian submanifold $F:L\looparrowright X$ in a gradient KR soliton $(X, J, \:\overline{\omega}, f)$ generates a solution $C_{t} = (\varphi_{t})^{-1}\circ F$ to KR-MCF. Moreover, the solution $C_{t}$ is
\begin{enumerate}
\item[(i)] ancient if $(X, J, \:\overline{\omega}, f)$ is a shrinking soliton,
\item[(ii)] immortal if $(X, J, \:\overline{\omega}, f)$ is an expanding soliton, and
\item[(iii)] eternal if $(X, J, \:\overline{\omega}, f)$ is a steady soliton.
\end{enumerate}
\end{proposition}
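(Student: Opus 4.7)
The plan is to package the proposition as an immediate corollary of Lemma~\ref{CF}. The key observation is that an $f$-minimal Lagrangian $F:L\looparrowright X$ is, by its very definition, a stationary (time-independent) solution of the generalized Lagrangian mean curvature flow
\begin{equation*}
\frac{d}{ds}F_{s} = H(F_{s}) + \tfrac{1}{2}(\overline{\nabla}f)^{\perp_{\overline{g},\,F_{s}}},
\end{equation*}
since the right-hand side is precisely the generalized mean curvature vector of $F_s$, which vanishes identically when $F_{s}\equiv F$. Hence $\{F_{s}\equiv F\}_{s\in\mathbb{R}}$ is a (trivial) GLMCF in the fixed background $(X,J,\overline{\omega})$, and feeding it into the converse direction of Lemma~\ref{CF} immediately produces the desired KR-MCF solution $C_{t} = \varphi_{t}^{-1}\circ F$ with respect to the canonical KRF $\overline{g}(t) = \sigma(t)\varphi_{t}^{*}\overline{g}$.

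It then remains to classify $C_t$ by determining the maximal $t$-interval on which $\sigma(t) = 1-ct > 0$ and $\varphi_{t}$ is defined. For a shrinker ($c>0$) this interval is $(-\infty,\,1/c)$, so $C_{t}$ extends infinitely into the past and is ancient. For an expander ($c<0$) it is $(1/c,\,\infty)$ with $1/c<0$, so $C_{t}$ extends to $+\infty$ and is immortal. For a steady soliton ($c=0$) one has $\sigma\equiv 1$, and provided $\tfrac{1}{2}\overline{\nabla}f$ is a complete vector field (which is standard on complete steady KR solitons), $\varphi_{t}$ and hence $C_{t}$ are defined for all $t\in\mathbb{R}$, making the flow eternal.

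There is no genuine obstacle beyond this bookkeeping; the only thing worth double-checking is that the reparametrization $s(t) = \int_{0}^{t}d\tau/\sigma(\tau)$ from Lemma~\ref{CF} maps the maximal $t$-interval bijectively onto the whole $s$-line $\mathbb{R}$, matching the domain on which the stationary GLMCF $F_{s}\equiv F$ is trivially defined. This is immediate from the explicit formula $s(t) = -\tfrac{1}{c}\log(1-ct)$ for $c\neq 0$ and $s(t)=t$ for $c=0$, which confirms the correspondence is globally consistent in each of the three cases.
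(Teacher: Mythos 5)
Your proposal is correct and follows essentially the same route as the paper: an $f$-minimal Lagrangian is a stationary solution of the generalized LMCF, the converse direction of Lemma~\ref{CF} converts it into the KR-MCF solution $C_{t}=(\varphi_{t})^{-1}\circ F$, and the classification into ancient, immortal, and eternal comes from the maximal interval on which $\sigma(t)=1-ct>0$. Your extra check that $s(t)$ maps that interval onto all of $\mathbb{R}$ is a nice (and harmless) addition the paper leaves implicit.
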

Yamamoto {\cite{Yamamoto1}}, {\cite{Yamamoto2}} shows that if the Ricci flow and the Ricci-mean curvature flow develop type-I singularities at the same point simultaneously, then the blow-up near the singular point is an $f$-minimal submanifold in a shrinking Ricci soliton. It would be interesting to see how other $f$-minimal submanifolds arise as local models for the singularities (in particular, type-II singularities) of KR-MCF.

\bibliographystyle{alpha}
\bibliography{fminlag.bib}

\

\noindent WEI-BO SU\vspace{0.2cm}\\
DEPARTMENT OF MATHEMATICS\\
NATIONAL TAIWAN UNIVERSITY\\
TAIPEI, TAIWAN\\
Email address: d03221004@ntu.edu.tw\\

\end{document}